\newcommand{\N}{\mathbb{N}}
\newcommand{\R}{\mathbb{R}}
\newcommand{\incomp}{\mathbin{||}}
\newcommand\restr[2]{{
  \left.\kern-\nulldelimiterspace 
  #1 
  \right|_{#2} 
  }}
\newtheorem{thm}{Theorem}[section]
\newtheorem{cor}[thm]{Corollary}
\newtheorem{lem}[thm]{Lemma}
\newtheorem{prop}[thm]{Proposition}
\theoremstyle{definition}
\newtheorem{defn}[thm]{Definition}
\newtheorem{ex}[thm]{Example}
\newtheorem{fact}[thm]{Fact}
\theoremstyle{remark}
\newtheorem{rem}[thm]{Remark}
\numberwithin{equation}{section}
\newtheoremstyle{case}{}{}{}{}{\itshape}{:}{ }{}
\theoremstyle{case}
\definecolor{gris10}{gray}{0.9}
\definecolor{gris}{gray}{0.5}
\def\tkzscl{0.8}
\title[Asociative, idempotent, symmetric, order-preserving op. on chains]{Associative, idempotent, symmetric, and order-preserving operations on chains}
\author[J. Devillet]{Jimmy Devillet}
\author[B. Teheux]{Bruno Teheux}
\keywords{associativity, commutative semigroup, semilattice, nondecreasing monotonicity, totally ordered set, Catalan numbers, binary tree}
\date{\today}
\subjclass[2010]{Primary 20M14, 06A12; Secondary 39B52, 05A15}
\begin{document}

\maketitle

\begin{abstract}
We characterize the associative, idempotent, symmetric, and order-preserving operations on (finite) chains in terms of properties of (the Hasse diagram of) their associated semilattice order. In particular, we prove that the number of associative, idempotent, symmetric, and order-preserving operations on an $n$-element chain is the $n^{\text{th}}$ Catalan number.
\end{abstract}
\section{Introduction}

%
The associativity equation for binary operations is ubiquitous in mathematics, as many algebraic structures are defined with associative operations (semigroups, groups, rings, Lie groups, \emph{etc}).  Associative operations also appear  in the algebraic treatment of classical and non-classical logics \cite{HandbookFuzz, Humberstone2011}.   They have also been studied by several authors in the theory of functional equations (see, e.g.,\cite{Acz2006,CzoDre84,Alsina2006} and the references therein). Moreover, associativity has been considered in conjunction with other properties, such as idempotency  (see, e.g., \cite{Kimura1958,McLean1954}) and quasitriviality (see, e.g., \cite{Ackerman,Langer1980,Couceiro2017}).
The class of associative, idempotent, and symmetric operations is of particular interest since it is in one-to-one correspondence with the class of partial orders of semilattices  (see, e.g., \cite{Gratzer2003}).


Let $(X,\leq)$ be an arbitrary totally ordered set. In this paper, we investigate the class of associative, idempotent, symmetric, and $\leq$-preserving  operations $F\colon X^2 \to X$. The subclass of those operations that are quasitrivial, that is, where $F(x,y)\in\{x,y\}$ for all $x,y\in X$, was investigated in \cite{Couceiro2017}. In particular, this subclass was characterized on both arbitrary sets and finite sets (see \cite[Theorem 3.7]{Couceiro2017}) in terms of their associated total order. This characterization uses the single-peakedness property that was introduced 70 years ago in Social Choice theory (see, e.g., \cite{Bla48,Bla87}). The aim of this paper is to obtain a more general characterization by relaxing quasitriviality into idempotency. In the case where $X$ is finite, we particularize this characterization in terms of properties of the Hasse diagram of the corresponding semilattice. We also obtain several enumeration of classes of semilattices. In this respect, one of our main results is a new occurrence of the Catalan numbers as the numbers of associative, idempotent, symmetric, and nondecreasing operations on finite chains.

The outline of this paper is as follows. In Section \ref{sec:prel} we set up the definitions and terminology used in this paper. In Section \ref{sec:car}, we characterize (Theorem \ref{thm:mainic}) by means of a generalization of the single-peakedness property  the class of associative, idempotent, and symmetric operations $F\colon X^2 \to X$ that are nondecreasing with respect to $\leq$.
In Section \ref{sec:finite}, we characterize (Theorem \ref{thm:main}) the same class, denoted by $C_n$,  when $X$ is a finite set of cardinality $n\geq 1$. This characterization is stated in terms of properties of the Hasse diagram of the semilattice $(X,F)$ . This result leads to an associativity test for idempotent, symmetric, and nondecreasing operations on finite chains. Moreover, we prove that the cardinality of $C_n$ is the $n^{\text{th}}$ Catalan number (Proposition \ref{prop:cat}), providing yet another construction of the sequence of Catalan numbers. Given a finite semilattice operation $\curlyvee$, we also consider the problem of enumerating the total orders for which  $\curlyvee$ is nondecreasing (Corollary \ref{cor:rev}). In section \ref{sec:kary}, we combine our characterizations with recent results about reducibility of $k$-ary associative   operations \cite{Kiss}.

\section{Preliminaries}\label{sec:prel}
In this section, we set up the  notation and terminology. Recall that an operation $F\colon X^2 \to X$ is said to be

\begin{itemize}
\item \emph{associative} if $F(x, F(y,z))=F(F(x,y),z)$ for all $x,y,z\in X$,
\item \emph{symmetric} if $F(x,y)=F(y,x)$ for all $x,\in X$,
\item \emph{idempotent}  if  $F(x,x)=x$ for all $x\in X$.
\end{itemize}

We refer to \cite{Davey2002} for an introduction to order theory.
A \emph{partially ordered set} is an ordered pair $(X,\preceq)$, where $X$ is a set and $\preceq$ is a \emph{partial order} on $X$, that is, a reflexive, antisymmetric, and transitive relation on $X$. We denote by $\prec$ the \emph{asymmetric part} of $\preceq$, that is, we have $x\prec y$ if and only if $x\preceq y$ and $y\not\preceq x$. We write $x\incomp y$ and  if $x$ and $y$ are \emph{incomparable}, that is, if $x\not\preceq y$ and $y\not\preceq x$. For any  $Y\subseteq X$, we denote by $\preceq_Y$ the \emph{restriction of $\preceq$ to $Y$}.
For simplicity, we often write $(Y, \preceq)$ for $(Y, \preceq_Y)$. A partial order $\leq$ on $X$ is said to be \emph{total}, and $(X, \leq)$ is called a \emph{totally ordered set} or a \emph{chain} if for every $x,y \in X$, we have $x\leq y$ or $y \leq x$. In this paper we use the notation $\leq$ for total orders.

An element $z$ of a partially ordered set $(X, \preceq)$ is an \emph{upper bound} of  $Y\subseteq X$  if $y\preceq z$ for every $y\in Y$. An upper bound $z$ of $Y$ is a \emph{supremum} of $Y$ if $z\preceq z'$ for every upper bound $z'$ of $Y$. \emph{Lower bounds} and \emph{infimum} are defined dually. Partial orders $\preceq$ on $X$ for which every pair  $\{x,y\}\subseteq X$ has a \emph{supremum} $x\curlyvee y$ are called \emph{join-semilattice orders}, and in this case $(X,\preceq)$ is a called a \emph{join-semilattice}. If $\preceq$ is a join-semilattice order, then it is known that the \emph{join operation} $\curlyvee \colon X^2\to X$ defined by $\curlyvee (x,y)= x\curlyvee y$ is associative, symmetric, and idempotent, and the 2-tuple $(X,\curlyvee)$ is called the \emph{semilattice associated with $\preceq$}.  We denote by $\vee$ the join operation of a total order $\leq$. It is easily seen that such an operation $\vee$ is quasitrivial. Groupoids  $(X,F)$ where $F$ is associative, symmetric and idempotent are called \emph{semilattices}, and $F$ is a \emph{semilattice operation}. It is well known that  every semilattice $(X,F)$ is the join-semilattice associated with the partial order $\preceq_F$ defined as
\begin{equation}\label{eqn:ord}
x\preceq_F y\quad \text{if} \quad F(x,y)=y.
\end{equation}
That is, the join operation of $\preceq_F$ is $F$.  We say that $\preceq_F$ is the \emph{(join-semilattice) order associated with $F$}, and we denote $\preceq_F$ by $\preceq$ if no confusion is possible. The semilattice operation $F$ is  quasitrivial if and only if $\preceq_F$ is a total order. The mappings $(X,\preceq) \mapsto (X,\curlyvee)$ and $(X,F) \mapsto (X,\preceq_F)$ are inverse to each other, and define a one-to-one correspondence between  join-semilattices and semilattices. By this correspondence,  we use either  $(X,\preceq)$ or $(X,\curlyvee)$ to denote a join-semilattice, and every semilattice order will be a join-semilattice order. Thanks to this convention, we write \emph{semilattice} for \emph{join-semilattice}.

A nonempty subset $I$  of a semilattice $(X, \curlyvee)$ is an \emph{ideal} if it is a lower set closed under $\curlyvee$, that is, if $x\in X$ and $y,z\in I$ are such that $x\leq y$, then $x\in I$ and $y\curlyvee z\in I$. A nonempty subset $H$ of a partially ordered set $(X,\preceq)$ is a \emph{filter} if it is a dually directed upper set, that is, if $x\in X$ and $y,z\in F$ are such that $y\preceq x$, then $x\in F$ and there is $t\in F$ such that $t\preceq y$ and $t\preceq z$. For every $x\in X$, the sets $(x]_\preceq=\{y\in X \mid y\preceq x\}$ and $[x)_\preceq=\{y\in X \mid x\preceq y\}$ are the \emph{ideal} and the \emph{filter} generated by $x$, respectively.
 An ideal $I$ is \emph{principal} if there is $x\in X$ such that $I=(x]$. \emph{Principal filters} are defined dually. In particular, in a finite semilattice, all filters and ideals are principal. In a totally ordered set $(X, \leq)$, we set  $[a,b]=\{x\in X  \mid a\leq x \leq b\}$ for every $a\leq b$ in $X$. A subset $C$ of $X$ is said to be \emph{convex} if it contains $[a,b]$ for any $a,b\in C$ with $a<b$.

Finite semilattices whose Hasse diagram is a binary tree are of special interest. We call them \emph{binary semilattices}.

\section{Order-preserving semilattice operations}\label{sec:car}

Let $\leq$ be a total order on a set $X$. An operation $F\colon X^2 \to X$ is said to be   \emph{$\leq$-preserving} if $F(x,y)\leq F(z,t)$ for every $x\leq z$ and $y\leq t$ in $X$. The main result of  this section characterizes the class of $\leq$-preserving semilattice operations in terms of their associated partial order (see Theorem \ref{thm:mainic}). The characterizing property, called \emph{nondecreasingness}, is introduced in Definition \ref{def:ic}. Single-peaked total orders (in the sense of \cite{Bla48,Bla87}, see also \cite{Couceiro2018,Couceiro2017}) are instances of nondecreasing partial orders. It follows that Theorem \ref{thm:mainic}  and \ref{thm:main} generalize \cite[Proposition 3.9]{Devillet2018} and  \cite[Proposition 5.1]{Devillet2018} by relaxing quasitriviality into idempotency.

\begin{defn}\label{def:ic}
Let $(X,\leq)$ be a chain. We say that a semilattice order $\preceq$ on $X$ has
the \emph{convex-ideal property} (\emph{CI-property} for short) \emph{for~$\leq$} if for every $a,b,c \in X$,
\begin{equation}\label{eqn:ic}
a\leq b \leq c \quad \implies \quad  b\preceq a  \curlyvee c.
\end{equation}
We say that $\preceq$ is \emph{internal for~$\leq$} if for every $a,b,c\in X$,
\begin{equation}\label{eqn:it}
a< b < c \quad \implies \quad (a\not\neq b \curlyvee c \quad \text{ and } \quad c\neq a\curlyvee b).
\end{equation}
We say that $\preceq$ is  \emph{nondecreasing  for~$\leq$} and  that the semilattice $(X, \curlyvee)$ is \emph{nondecreasing for $\leq$}  if $\preceq$ has the CI-property and is internal for $\leq$.
\end{defn}

Note that if $\preceq$ and $\leq$ are total orders on $X$, then $\preceq$ is nondecreasing for $\leq$ if and only if it is single-peaked for $\leq$ in the sense of  \cite{Bla48,Bla87}, that is, if and only if condition \eqref{eqn:ic} is satisfied. The terminology introduced in Definition \ref{def:ic} is justified in Lemmas \ref{lem:ic}  and \ref{lem:int}, and Theorem   \ref{thm:mainic}. Note that conditions \eqref{eqn:ic} and \eqref{eqn:it} are self-dual w.r.t.~the total order $\leq$, \emph{i.e.}, if $\leq^\alpha$ is the dual order of $\leq$ (that is, $x \leq^\alpha y$ if and only if $y \leq x$),  then $\leq$ satisfies \eqref{eqn:ic} and \eqref{eqn:it} if and only if $\leq^\alpha$ satisfies \eqref{eqn:ic} and \eqref{eqn:it}, respectively.

\begin{fact}\label{fact:res}
Let $(X,\leq)$ be a totally ordered set, let $\preceq$ be a semilattice order on $X$, and let $P \subseteq X$.  If the restriction $\preceq_P$ of $\preceq$ to $P$ is a total order, then $\preceq_P$ is nondecreasing for~$\leq_P$ if and only if  it is single-peaked for $\leq_P$ in the sense of  \cite{Bla48,Bla87}.
\end{fact}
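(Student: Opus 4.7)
The plan is to invoke the observation made right after Definition \ref{def:ic}: when $\preceq$ and $\leq$ are \emph{both} total orders on the same set, nondecreasingness of $\preceq$ for $\leq$ collapses to the single condition \eqref{eqn:ic}, which is exactly the defining condition of single-peakedness in the sense of \cite{Bla48,Bla87}. Applied to the restricted data $(P,\preceq_P,\leq_P)$, this yields the fact immediately, once two small bookkeeping points have been settled.

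The first point to check is that the notion of nondecreasingness makes sense on $P$, i.e., that $\preceq_P$ is itself a semilattice order on $P$ whose join agrees with the restriction of $\curlyvee$. Since $\preceq_P$ is total, every pair $\{a,b\}\subseteq P$ trivially admits a $\preceq_P$-supremum, namely $\max_{\preceq_P}\{a,b\}$; and since $\preceq_P$ is the restriction of the ambient semilattice order $\preceq$, two $\preceq_P$-comparable elements of $P$ are $\preceq$-comparable with the same orientation. Consequently $a\curlyvee b=\max_{\preceq_P}\{a,b\}\in P$, so joins of pairs from $P$ may be computed indifferently in $\preceq$ or in $\preceq_P$.

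The second point is that the internality condition \eqref{eqn:it} is automatically satisfied whenever $\preceq_P$ is total. Indeed, if $a<b<c$ in $P$, then $a,b,c$ are pairwise distinct, and by the previous paragraph $b\curlyvee c\in\{b,c\}$ and $a\curlyvee b\in\{a,b\}$; so neither $a=b\curlyvee c$ nor $c=a\curlyvee b$ can hold. Thus, for $\preceq_P$ total, nondecreasingness for $\leq_P$ reduces to the CI-property \eqref{eqn:ic}, which is precisely single-peakedness.

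There is essentially no genuine obstacle in this argument: the statement is a direct transcription of the already noted equivalence between nondecreasingness and single-peakedness for pairs of total orders, combined with the routine verification that restricting a semilattice order to a subset on which it is total does not alter the joins that appear in \eqref{eqn:ic} and \eqref{eqn:it}.
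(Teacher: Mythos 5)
Your argument is correct and is essentially the paper's own justification: the paper states this Fact without proof, as an immediate consequence of the preceding note that for a pair of total orders nondecreasingness collapses to condition \eqref{eqn:ic}, i.e.\ to single-peakedness, applied to $(P,\leq_P,\preceq_P)$. Your two bookkeeping checks (that $\preceq_P$-joins of pairs in $P$ agree with the ambient joins, and that internality \eqref{eqn:it} is automatic when $\preceq_P$ is total) are exactly the routine verifications implicit in that note.
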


It follows from Fact \ref{fact:res} that those total orders that are nondecreasing for a given total order $\leq$ are exactly  the single-peaked ones.
\begin{rem}
Condition \eqref{eqn:ic} is clearly equivalent to
\[
a< b < c \quad \implies \quad  b\preceq a  \curlyvee c,
\]
but the partial order $\preceq$ cannot be replaced by its irreflexive part in \eqref{eqn:ic}. Indeed, if $\leq$ is the natural order on  $X_3=\{1,2,3\}$ and if $\preceq$ is the partial semilattice order defined as $1\preceq 2$, $3\preceq 2$, and $1\incomp3$, then $\preceq$ satisfies   \eqref{eqn:ic} but $2\not\prec 1\curlyvee 3$.

\end{rem}

The following lemma is a generalization of \cite[Proposition 3.10]{Devillet2018} for nondecreasing semilattice orders.

\begin{lem}\label{lem:ic}
Let $(X,\leq)$ be a totally ordered set and $\preceq$ be a  semilattice order on $X$. The following conditions are equivalent.
\begin{enumerate}[(i)]
\item\label{it:bfr01} The semilattice order $\preceq$ has the CI-property for $\leq$.
\item\label{it:bfr02} Evey ideal of $(X, \preceq)$ is a convex subset of $(X, \leq)$.
\item\label{it:bfr03} Every principal ideal of $(X, \preceq)$ is a convex subset of $(X, \leq)$.
\item\label{it:bfr04} If $x'\preceq x$ or $x'\incomp x$ then $x$ is an upper bound or a lower bound of $(x']_\preceq$ in $(X,\leq)$.
\end{enumerate}
\end{lem}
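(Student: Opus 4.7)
The plan is to establish the cycle (i) $\Rightarrow$ (ii) $\Rightarrow$ (iii) $\Rightarrow$ (iv) $\Rightarrow$ (i), exploiting the idea that condition~(iv) is essentially a contrapositive reformulation of convexity for principal ideals of $(X,\preceq)$ inside $(X,\leq)$. The implication (ii) $\Rightarrow$ (iii) is immediate, since every principal ideal is an ideal.

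For (i) $\Rightarrow$ (ii), I would fix an ideal $I$ of $(X,\preceq)$ and $a,c\in I$ with $a\leq b\leq c$. Closure of $I$ under $\curlyvee$ yields $a\curlyvee c\in I$; the CI-property then gives $b\preceq a\curlyvee c$, and the lower-set property of $I$ delivers $b\in I$, so $I$ is $\leq$-convex. For (iii) $\Rightarrow$ (iv), I would argue by contraposition: if $x\notin(x']_\preceq$ fails to be either an upper or a lower bound of $(x']_\preceq$ in $(X,\leq)$, then one can pick $y_1,y_2\in(x']_\preceq$ with $y_1<x<y_2$, and the assumed convexity of the principal ideal $(x']_\preceq$ forces $x\in(x']_\preceq$, a contradiction; the remaining possibility $x=x'$ is trivial.

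For (iv) $\Rightarrow$ (i), given $a\leq b\leq c$ I would apply~(iv) with $x=b$ and $x'=a\curlyvee c$. If $b\not\preceq a\curlyvee c$, then the hypothesis of~(iv) is met, so $b$ must be an upper or lower bound of $(a\curlyvee c]_\preceq$ in $(X,\leq)$. But $a,c\in(a\curlyvee c]_\preceq$ together with $a\leq b\leq c$ rules this out unless $b\in\{a,c\}\subseteq(a\curlyvee c]_\preceq$, again contradicting $b\notin(a\curlyvee c]_\preceq$. Hence $b\preceq a\curlyvee c$, which is exactly~\eqref{eqn:ic}.

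Overall the argument is a chase through the definitions; the central device is the principal ideal $(a\curlyvee c]_\preceq$, which converts the CI-inequality into a membership/convexity statement and makes the equivalence between~(i) and~(iii) almost tautological. I do not anticipate a serious obstacle, beyond being careful in the (iii) $\Rightarrow$ (iv) step to articulate what ``$x$ is an upper or lower bound of $(x']_\preceq$'' excludes, so that the contrapositive reading of~(iv) genuinely matches $\leq$-convexity of the principal ideal.
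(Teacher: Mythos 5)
Your cycle is in substance the paper's own argument: the paper proves (i) $\Rightarrow$ (ii) $\Rightarrow$ (iii) $\Rightarrow$ (i), with the return step using exactly your device, convexity of the principal ideal $(a\curlyvee c]_\preceq$, and then declares (iii) $\Leftrightarrow$ (iv) obvious; you simply route the return through (iv). Your steps (i) $\Rightarrow$ (ii), (ii) $\Rightarrow$ (iii) and (iv) $\Rightarrow$ (i) are all correct.

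The genuine problem is the sentence ``the remaining possibility $x=x'$ is trivial'' in your (iii) $\Rightarrow$ (iv). Read literally, the hypothesis of (iv), ``$x'\preceq x$ or $x'\incomp x$'', includes the reflexive case $x=x'$, and in that case the conclusion can fail even though every principal ideal is convex: take the paper's own example on $X_3=\{1,2,3\}$ with the natural order and $\preceq$ given by $1\prec 2$, $3\prec 2$, $1\incomp 3$. All principal ideals $\{1\}$, $\{3\}$, $\{1,2,3\}$ are convex in $(X,\leq)$, yet $x=x'=2$ is neither an upper nor a lower bound of $(2]_\preceq=\{1,2,3\}$. So that case is not trivial --- as stated it is false --- and what this reveals is that (iv) must be read with $x\not\preceq x'$ (equivalently $x'\prec x$ or $x'\incomp x$), i.e.\ with the case $x=x'$ excluded; the paper hides this by calling (iii) $\Leftrightarrow$ (iv) obvious. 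Under that reading your argument is complete: the hypothesis then gives $x\notin(x']_\preceq$, so your convexity contradiction applies, and your (iv) $\Rightarrow$ (i) step is unaffected, since you only invoke (iv) when $b\not\preceq a\curlyvee c$. You should say this explicitly (or exhibit the counterexample) rather than dismiss the $x=x'$ case.
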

\begin{proof}
(\ref{it:bfr01}) $\implies$ (\ref{it:bfr02}) Let $I$ be an ideal of $(X, \preceq)$ and let $a, c\in I$. For every $b\in X$ such that $a\leq b \leq c$ we have $b \preceq a\curlyvee c$ by the CI-property. If follows that $b\in I$ since $I$ is an ideal of $(X,\preceq)$ that contains $a$ and $c$.

(\ref{it:bfr02}) $\implies$ (\ref{it:bfr03}) Obvious.

(\ref{it:bfr03}) $\implies$ (\ref{it:bfr01}) Let $a\leq b \leq c$ in $X$. By (\ref{it:bfr03}), the ideal $(a\curlyvee c]_\preceq$ is convex in $(X, \leq)$. Since it contains $a$ and $c$, it also contains $b$. It follows that $b\preceq a \curlyvee c$.

(\ref{it:bfr03}) $\iff$ (\ref{it:bfr04}) Obvious.
\end{proof}

Now we give equivalent formulations of the internality property \eqref{eqn:it} for semilattice orders. Recall that a binary operation $F\colon X^2 \to X$ defined on a totally ordered set $(X,\leq)$ is said to be \emph{internal} \cite{Grabisch2009} if $x\leq F(x,y) \leq y$ for every $x, y\in X$.

\begin{lem}\label{lem:int}
Let $(X,\leq)$ be a chain and let $\preceq$ be a join-semilattice order on $X$. The following conditions are equivalent.
\begin{enumerate}[(i)]
\item\label{it:mlo01} The partial order $\preceq$ is internal for ~$\leq$.
\item\label{it:mlo02} The join operation of $\preceq$ is internal.
\item\label{it:mlo03} There are no $a,b,c\in X$ such that $a<b<c$  and $a\curlyvee b=b\curlyvee c\in \{a,c\}$.
\end{enumerate}
Moreover, if one of these conditions is satisfied, then there are no pairwise $\preceq$-incomparable elements $a,b,c$ of $X$ such that $a\curlyvee b=a \curlyvee c=b\curlyvee c$.
\end{lem}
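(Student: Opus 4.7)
The plan is to prove the two equivalences (i)$\Leftrightarrow$(ii) and (i)$\Leftrightarrow$(iii) separately and then to deduce the \emph{moreover} statement from internality of the join operation.

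First I would handle (i)$\Leftrightarrow$(ii). For (ii)$\Rightarrow$(i), given $a<b<c$, internality of $\curlyvee$ applied to the pairs $(a,b)$ and $(b,c)$ forces $a\curlyvee b\leq b<c$ and $a<b\leq b\curlyvee c$, which rules out $c=a\curlyvee b$ and $a=b\curlyvee c$. For (i)$\Rightarrow$(ii), given $x<y$ I set $z:=x\curlyvee y$ and rule out both $z<x$ and $z>y$ in $\leq$: if $z<x$, then applying (i) to the triple $z<x<y$ yields $z\neq x\curlyvee y$, a contradiction; if $z>y$, then (i) applied to $x<y<z$ yields the same contradiction. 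Hence $x\leq z\leq y$.

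Next I would handle (i)$\Leftrightarrow$(iii). For (i)$\Rightarrow$(iii), if $a<b<c$ satisfy $a\curlyvee b=b\curlyvee c\in\{a,c\}$, then whichever endpoint the common value equals, one of the two clauses of (i) is directly violated. For the converse I argue by contrapositive: if (i) fails, then there exist $a<b<c$ with $a=b\curlyvee c$ or $c=a\curlyvee b$. In the first case $b\preceq a$ by definition of the join, so $a\curlyvee b=a=b\curlyvee c\in\{a,c\}$; the second case is symmetric, with $b\preceq c$ giving $a\curlyvee b=c=b\curlyvee c$.

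For the \emph{moreover} statement, assume the equivalent conditions hold and suppose, toward a contradiction, that $a,b,c\in X$ are pairwise $\preceq$-incomparable with $d:=a\curlyvee b=a\curlyvee c=b\curlyvee c$. Since the hypothesis is symmetric in $a,b,c$, I may relabel so that $a<b<c$ in $\leq$. Internality (condition (ii)) applied to $a\leq b$ gives $d=a\curlyvee b\leq b$, and applied to $b\leq c$ gives $b\leq b\curlyvee c=d$, whence $d=b$. But $a\curlyvee b=b$ means $a\preceq b$, contradicting the incomparability of $a$ and $b$. The argument presents no real obstacle; the only point requiring care is to consistently distinguish the two orders $\leq$ and $\preceq$, since conditions (i)--(iii) and internality of $\curlyvee$ are statements about the chain order $\leq$ while the definition of the join refers to $\preceq$.
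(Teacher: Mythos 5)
Your proposal is correct and uses essentially the same ingredients as the paper's proof: the same elementary case analysis for the equivalences (the paper cycles through (i)$\Rightarrow$(ii)$\Rightarrow$(iii)$\Rightarrow$(i) while you prove two biconditionals, but the individual arguments coincide), and the same use of internality for the \emph{moreover} part. Your final step there (forcing $d=b$ and contradicting incomparability via $a\preceq b$) just makes explicit what the paper phrases as $d\in\left]a,b\right[\cap\left]b,c\right[=\varnothing$.
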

\begin{proof}
(\ref{it:mlo01}) $\implies$ (\ref{it:mlo02}) For any $a<b$ in $X$, we cannot have $a\curlyvee b <a<b$ or $a<b<a\curlyvee b$, since this would contradict  internality of $\preceq$ for $\leq$. It follows that $a\curlyvee b \in [a,b]$.

(\ref{it:mlo02}) $\implies$ (\ref{it:mlo03}) Let $a<b<c$ in $X$. If $a\curlyvee b=b\curlyvee c=a$, then $b\curlyvee c\not\in [b,c]$.  If $a\curlyvee b=b\curlyvee c=c$, then $a\curlyvee b\not\in [a,b]$.

(\ref{it:mlo03}) $\implies$ (\ref{it:mlo01}) We show the contrapositive. Assume that there are $a<b<c$ such that $a=b\curlyvee c$. Then $a\curlyvee b=b\curlyvee c=a$. Similarly, if $c=a\curlyvee b$ then $b\curlyvee c= a\curlyvee b=c$.

Now, assume that one of the conditions of the statement is satisfied, and that $a<b<c$ are pairwise $\preceq$-incomparable elements of $X$. If $a\curlyvee b$ and $b\curlyvee c$ are equal to a common element $d$,  it follows from  (\ref{it:mlo02}) that $d\in \left]a,b\right[\cap\left]b,c\right[=\varnothing$, a contradiction.
\end{proof}

\begin{cor}
Let $(X,\leq)$  be a chain and  $F\colon X^2 \to X$ be an  operation. Then $F$ is associative, symmetric and internal if and only if $F$ is the join operation of a semilattice order that is internal for $\leq$.
\end{cor}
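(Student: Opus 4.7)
The plan is to observe that this corollary is essentially a direct repackaging of Lemma \ref{lem:int}, once we notice that \emph{internality} of $F$ already forces idempotency, which is the missing ingredient needed to invoke the semilattice/partial-order correspondence.

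First I would record the key elementary observation: if $F$ is internal for $\leq$, then setting $x=y$ in the defining inequality $x\leq F(x,y)\leq y$ yields $x\leq F(x,x)\leq x$, so $F(x,x)=x$. Thus every associative, symmetric, internal operation on $(X,\leq)$ is automatically idempotent, hence a semilattice operation, and by the correspondence recalled before equation \eqref{eqn:ord} it coincides with the join operation $\curlyvee$ of its associated semilattice order $\preceq_F$.

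Then for the forward direction I would argue as follows. Assume $F$ is associative, symmetric, and internal. By the observation above, $F=\curlyvee$ is the join operation of $(X,\preceq_F)$. Since $\curlyvee$ is internal, condition (\ref{it:mlo02}) of Lemma \ref{lem:int} holds, which by the implication (\ref{it:mlo02}) $\Rightarrow$ (\ref{it:mlo01}) established in that lemma gives that $\preceq_F$ is internal for $\leq$. For the converse, suppose $F$ is the join operation of some semilattice order $\preceq$ that is internal for $\leq$. Being a join operation, $F$ is associative, symmetric (and idempotent); applying the implication (\ref{it:mlo01}) $\Rightarrow$ (\ref{it:mlo02}) of Lemma \ref{lem:int} yields that $F$ is internal, as required.

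The only step that is not entirely formal is the first one, that internality entails idempotency; but this is a one-line check, so there is no real obstacle — the corollary is a cosmetic reformulation of Lemma \ref{lem:int} in the language of operations rather than orders.
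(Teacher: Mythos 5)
Your argument is correct and is exactly the intended justification: the paper states this corollary without proof, and the implicit argument is precisely yours, namely that internality forces $F(x,x)=x$, so $F$ is a semilattice operation coinciding with the join of $\preceq_F$, after which the equivalence (\ref{it:mlo01})$\iff$(\ref{it:mlo02}) of Lemma \ref{lem:int} transfers internality between the operation and its associated order in both directions. Nothing further is needed.
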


\begin{rem}
If $\preceq$ is a semilattice order that is internal for a total order $\leq$, there might be $\preceq$-incomparable elements $a,b,c$ such that $a\curlyvee b=b\curlyvee c$. Consider for instance $X=\{a,b,c,d,e\}$ with $a<e<c<d<b$ and the semilattice order $\preceq$ defined by $a\incomp b$, $a\incomp c$, $c\incomp b$, $a\curlyvee c=e$ and $e\curlyvee b=d$. Then $\preceq$ is internal for $\leq$ and $a\curlyvee b=b\curlyvee c$.
\end{rem}

\begin{rem}\label{ex:base}
The join operation $\curlyvee$ of a semilattice order $\preceq$ that has the CI-property for a total order $\leq$ need not be $\leq$-preserving. For instance, if $\leq$ is the natural order on $X_3=\{1,2,3\}$ and $\preceq$ is defined on $X$ as $2\preceq 1$, $3\preceq 1$ and $2\incomp 3$, then $\preceq$ has the CI-property for $\leq$ but $2\curlyvee 2=2>1=2\curlyvee 3$. This example also shows that the CI-property for $\leq$ does not imply internality for $\leq$.

Conversely,  the join operation $\curlyvee$ of a semilattice order $\preceq$ that is internal for a total order $\leq$ needs not to be $\leq$-preserving. For instance, if $\preceq$ is the total order $1\preceq 3 \preceq 2$, then $\preceq$ is internal for $\leq$ but $2=2\curlyvee 1 > 1\curlyvee 1=1$. This example also shows that internality for $\leq$ does not imply CI-property for $\leq$.
\end{rem}

\begin{thm}\label{thm:mainic}
Let $(X, \leq)$ be a totally ordered set and $ \curlyvee \colon X^2 \to X$ be a semilattice operation. The following conditions are equivalent.
\begin{enumerate}[(i)]
\item\label{it:opi01} $\curlyvee$ is $\leq$-preserving.
\item\label{it:opi02} The order $\preceq$ associated with $\curlyvee$ is nondecreasing for $\leq$.
\end{enumerate}
\end{thm}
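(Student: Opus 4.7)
The plan is to prove both implications by reducing to the characterizations provided by Lemmas \ref{lem:ic} and \ref{lem:int}, exploiting idempotency to move freely between $\leq$ and $\preceq$.

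For (\ref{it:opi01}) $\Rightarrow$ (\ref{it:opi02}), I first establish internality of $\curlyvee$: for any $a\leq c$, applying $\leq$-preservation to $a\leq a,a\leq c$ and to $a\leq c,c\leq c$, and using idempotency, yields $a=a\curlyvee a\leq a\curlyvee c\leq c\curlyvee c=c$. By Lemma \ref{lem:int} this gives that $\preceq$ is internal for $\leq$. For the CI-property, take $a\leq b\leq c$, set $d=a\curlyvee c$, and note that by internality $a\leq d\leq c$ in $\leq$. I split on the position of $b$ relative to $d$. If $a\leq b\leq d$, let $u=a\curlyvee b$; $\leq$-preservation applied to $a\leq u$ and $u\leq d$ (plus $c\leq c$) yields $a\curlyvee c\leq u\curlyvee c\leq d\curlyvee c=d$, so $b\curlyvee d=(a\curlyvee b)\curlyvee c=u\curlyvee c=d$, whence $b\preceq d$. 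If $d\leq b\leq c$, let $u=b\curlyvee c$; by internality $d\leq b\leq u\leq c$, and $\leq$-preservation gives $d=a\curlyvee d\leq a\curlyvee u\leq a\curlyvee c=d$, so $b\curlyvee d=a\curlyvee u=d$, whence again $b\preceq d$.

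For (\ref{it:opi02}) $\Rightarrow$ (\ref{it:opi01}), by symmetry of $\curlyvee$ it suffices to show that $x\leq z$ implies $x\curlyvee y\leq z\curlyvee y$ for every $y$, and then chain two such inequalities. Three cases arise.

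The case $x\leq y\leq z$ follows immediately from internality (Lemma \ref{lem:int}(\ref{it:mlo02})): $x\curlyvee y\leq y\leq y\curlyvee z$. In the case $y\leq x\leq z$, set $u=x\curlyvee y$ and $v=y\curlyvee z$; CI gives $x\preceq y\curlyvee z=v$, so $u\curlyvee v=x\curlyvee y\curlyvee z=v$, i.e.\ $u\preceq v$. Since $y\preceq u$, we have $y\in(u]_\preceq$, and by Lemma \ref{lem:ic}(\ref{it:bfr04}) $v$ is either an upper or a lower bound of $(u]_\preceq$ in $\leq$; internality gives $v\geq y$, so if $v$ is a lower bound then $v=y$, which forces $v=u$, while if $v$ is an upper bound then $v\geq u$ directly. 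The remaining case $x\leq z\leq y$ is handled dually: CI yields $z\preceq x\curlyvee y=u$, hence $v=y\curlyvee z\preceq u$, and one applies Lemma \ref{lem:ic}(\ref{it:bfr04}) with $x'=v$, $x=u$, noting $y\in(v]_\preceq$ and $u\leq y$ from internality, to force $u\leq v$.

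The main obstacle is the last case: using CI to upgrade a comparison in $\preceq$ into one in $\leq$, which requires the upper-or-lower-bound dichotomy of Lemma \ref{lem:ic}(\ref{it:bfr04}) together with a careful use of internality to rule out the degenerate branch.
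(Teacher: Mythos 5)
Your proposal is correct, and while the forward direction is essentially the paper's argument, the backward one is genuinely different. For (\ref{it:opi01})$\Rightarrow$(\ref{it:opi02}) your internality argument is exactly the paper's; for the CI-property the paper avoids your case split on the position of $b$ relative to $d=a\curlyvee c$ by a one-line squeeze, namely $a\curlyvee c=a\curlyvee(a\curlyvee c)\leq b\curlyvee(a\curlyvee c)\leq c\curlyvee(a\curlyvee c)=a\curlyvee c$, which gives $b\preceq a\curlyvee c$ directly without first invoking internality. For (\ref{it:opi02})$\Rightarrow$(\ref{it:opi01}) the paper reasons by contradiction: assuming $b<c$ and $a\curlyvee c<a\curlyvee b$, it analyzes the three possible orderings of $a\curlyvee b$, $a\curlyvee c$, $b\curlyvee c$ and derives a violation of internality via the CI-property; you instead prove one-variable monotonicity directly, splitting on the position of the fixed argument $y$ and, in the two nontrivial cases, converting the $\preceq$-comparison $u\preceq v$ (obtained from CI) into a $\leq$-comparison through the upper-/lower-bound dichotomy of Lemma~\ref{lem:ic}(\ref{it:bfr04}) plus internality, then chaining two one-variable inequalities. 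Your route is a bit longer but constructive and makes visible exactly where convexity of principal ideals and internality enter, whereas the paper's contradiction argument is shorter per case. One small remark: in your degenerate subcases (e.g.\ $v=u$ in the case $y\leq x\leq z$) the dichotomy of Lemma~\ref{lem:ic}(\ref{it:bfr04}) is not really needed, since $u\preceq v$ with $u=v$ makes the desired inequality trivial; spelling this out would also sidestep the fact that clause (\ref{it:bfr04}) is only meaningful when $x\not\preceq x'$.
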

\begin{proof}
(\ref{it:opi01}) $\implies$ (\ref{it:opi02}) First, we prove that $\preceq$ has the CI-property for $\leq$.  Let $a<b<c$ in $X$. Since $\curlyvee$ is $\leq$-preserving, we obtain
\[
a\curlyvee c=a \curlyvee (a\curlyvee c)\leq b \curlyvee (a\curlyvee c) \leq c \curlyvee (a\curlyvee c)=a\curlyvee c.
\]
It follows that $b \curlyvee (a\curlyvee c)=a\curlyvee c$, which proves that $b\preceq (a \curlyvee c)$.

Then, we prove that $\preceq$ is internal for $\leq$. By Lemma \ref{lem:int}, it suffices to note that for every $a<b$ in $X$, we have
\[
a=a\curlyvee a  \leq a \curlyvee b  \leq b \curlyvee b =b,
\]
since $\curlyvee$ is $\leq$-preserving.

(\ref{it:opi02}) $\implies$ (\ref{it:opi01}) For the sake of contradiction, assume that there are $a,b,c\in X$ such that $b<c$ and $a \curlyvee c<a \curlyvee b$.
Assume first that $b \curlyvee c<a \curlyvee c<a \curlyvee b$.  It follows by internality of $\preceq$ for $\leq$ that $a\not\preceq b \curlyvee c$ and $c\not\preceq a \curlyvee b$ . Since $\preceq$ has the CI-property for $\leq$, we obtain that $\neg (b<a<c)$ and $\neg (b<c<a)$, so $a<b<c$. From $b<c$ and Lemma  \ref{lem:int}, we deduce that $a<a\curlyvee b\leq b<c$. It follows that $b \curlyvee c<b<c$, which contradicts internality of $\preceq$ for $\leq$ by Lemma \ref{lem:int}.

The case $a  \curlyvee c<a  \curlyvee b<b  \curlyvee c$ follows by duality, and the case $a  \curlyvee c<b  \curlyvee c<a  \curlyvee b$ is obtained similarly.
\end{proof}

The following result is a direct consequence of Theorem \ref{thm:mainic}.
\begin{thm} \label{cor:main}
Let $(X, \leq)$ be a totally ordered set and let $F\colon X^2 \to X$ be an operation. The following conditions are equivalent
\begin{enumerate}[(i)]
\item $F$  is a $\leq$-preserving semilattice operation.
\item $F$ is the join operation of a  semilattice order $\preceq$ on $X$ that is nondecreasing for $\leq$.
\end{enumerate}
\end{thm}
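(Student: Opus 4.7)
The plan is to derive Theorem \ref{cor:main} directly from the one-to-one correspondence between semilattices $(X,F)$ and semilattice orders $(X,\preceq)$ recalled in Section \ref{sec:prel}, together with Theorem \ref{thm:mainic}. Recall from equation \eqref{eqn:ord} that every semilattice operation $F$ (that is, every associative, symmetric, and idempotent operation) is itself the join operation of a uniquely determined semilattice order $\preceq_F$, and conversely every semilattice order yields a semilattice operation via its join. Thus the phrases \emph{semilattice operation} and \emph{join operation of some semilattice order} are interchangeable, and the content of the statement reduces to comparing the $\leq$-preservation of $F$ with the nondecreasingness of the associated order.

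For the direction (i) $\implies$ (ii), I would assume that $F$ is a $\leq$-preserving semilattice operation. The correspondence above produces a semilattice order $\preceq_F$ on $X$ whose join operation is exactly $F$. Applying Theorem \ref{thm:mainic} to the pair $(X,\leq)$ with $\curlyvee = F$, the hypothesis that $F$ is $\leq$-preserving immediately yields that $\preceq_F$ is nondecreasing for $\leq$, giving (ii).

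For the converse (ii) $\implies$ (i), I would start with a semilattice order $\preceq$ on $X$ that is nondecreasing for $\leq$ and whose join is $F$. By definition of the join of a semilattice order, $F$ is associative, symmetric, and idempotent, so $F$ is a semilattice operation. The other implication of Theorem \ref{thm:mainic} then delivers that $F$ is $\leq$-preserving, establishing (i).

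There is no real obstacle here: once Theorem \ref{thm:mainic} is available the result is a one-line repackaging. The only point deserving care is to invoke the bijection between semilattice operations and semilattice orders explicitly, so that the hypothesis "$F$ is a semilattice operation" in (i) is legitimately converted into the hypothesis "$F$ is the join of some semilattice order" used in (ii), and conversely.
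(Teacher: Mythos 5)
Your proposal is correct and follows exactly the route the paper intends: the paper states this result as a direct consequence of Theorem \ref{thm:mainic}, and your argument simply makes explicit the bijection between semilattice operations and semilattice orders from \eqref{eqn:ord} before applying Theorem \ref{thm:mainic} in each direction. No gaps.
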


Theorem \ref{thm:mainic} will be  particularized for finite chains $(X,\leq)$ in Theorem \ref{thm:main} and illustrated in Example \ref{ex:main}

We close this section by showing that if $\preceq$ is a semilattice order that is nondecreasing for a total order $\leq$, then every filter of $(X,\preceq)$ is totally ordered.

\begin{defn}
A partial order $\preceq$  on $X$ is sad to have the \emph{linear filter property} if every of its filter is totally ordered.
\end{defn}

\begin{lem}\label{lem:linf}
A partial order on $X$ has the linear filter property if and only if  no pair $\{a,b\}$ of incomparable elements of $X$  has a lower bound.
\end{lem}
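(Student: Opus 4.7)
The plan is to prove both implications directly from the definition of a filter, using that filters are simultaneously upper sets and down-directed.

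For the ``only if'' direction, I would argue by contrapositive. Suppose $\{a,b\}$ is a pair of $\preceq$-incomparable elements admitting a lower bound $c\in X$. The natural witness against the linear filter property is the principal filter $[c)_\preceq$. Indeed, $[c)_\preceq$ is an upper set by construction, and it is dually directed because for any $y,z\in [c)_\preceq$ the element $c$ itself belongs to $[c)_\preceq$ and satisfies $c\preceq y$ and $c\preceq z$. Since $c\preceq a$ and $c\preceq b$, both $a$ and $b$ lie in $[c)_\preceq$, so this filter contains an incomparable pair and is therefore not totally ordered.

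For the ``if'' direction, assume that no two incomparable elements of $X$ share a lower bound, and let $H$ be an arbitrary filter of $(X,\preceq)$. Given $y,z\in H$, the dual directedness clause in the definition of a filter provides some $t\in H$ with $t\preceq y$ and $t\preceq z$; such a $t$ is in particular a lower bound of $\{y,z\}$ in $X$. By hypothesis, $\{y,z\}$ cannot be incomparable, so $y\preceq z$ or $z\preceq y$. Hence $H$ is totally ordered, which is exactly the linear filter property.

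The argument is essentially bookkeeping with the definition; the only point that requires a moment's care is checking that $[c)_\preceq$ really is a filter in the paper's sense (both an upper set \emph{and} down-directed), which is immediate once one notes that $c$ itself serves as the required common lower witness inside the filter. No appeal to finiteness, to the ambient total order $\leq$, or to the CI/internality machinery of the preceding sections is needed, so the lemma holds for arbitrary partial orders on arbitrary sets.
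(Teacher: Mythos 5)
Your argument is correct and is essentially the paper's own proof: the necessity direction uses exactly the same witness, the principal filter $[c)_\preceq$ generated by a common lower bound of the incomparable pair, and your sufficiency argument simply spells out the down-directedness step that the paper dismisses as obvious. Nothing further is needed.
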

\begin{proof}
(Necessity)
If $\preceq$ is a partial order on $X$ having the linear filter property and there is a pair $\{a,b\}$ of incomparable elements of $X$  that has a lower bound $c$, then $[c)$ is a filter that is not totally ordered.

(Sufficiency) Obvious.
\end{proof}

%
\begin{prop}\label{lem:icintlin}
Let $(X,\leq)$ be a totally ordered set and $\preceq$ be a semilattice order on $X$. If $\preceq$ is nondecreasing for $\leq$, then it has the linear filter property.
\end{prop}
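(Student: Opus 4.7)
My plan is to invoke Lemma \ref{lem:linf} and reduce the statement to showing that no two $\preceq$-incomparable elements of $X$ share a $\preceq$-lower bound. So I would assume for contradiction that $a,b\in X$ are $\preceq$-incomparable and that $c\in X$ satisfies $c\preceq a$ and $c\preceq b$ (equivalently, $c\curlyvee a=a$ and $c\curlyvee b=b$). By symmetry of the total order $\leq$ (and the fact that both the CI-property and internality are self-dual with respect to $\leq$), I may assume $a<b$ in $\leq$, and then split on where $c$ lies in the chain $(X,\leq)$.

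The first two cases are handled directly by the CI-property. If $c\leq a<b$, then either $c=a$, in which case $a=c\preceq b$ contradicts $a\incomp b$, or $c<a<b$, in which case the CI-property gives $a\preceq c\curlyvee b=b$, again a contradiction. Dually, if $a<b\leq c$, then either $b=c\preceq a$ or the CI-property applied to $a<b<c$ gives $b\preceq a\curlyvee c=a$. Both outcomes contradict $a\incomp b$.

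The remaining case $a<c<b$ is the one that truly uses internality together with the CI-property. Let $d=a\curlyvee b$; since $a\incomp b$, we have $d\neq a,b$, and by Lemma \ref{lem:int} the internality of $\preceq$ for $\leq$ forces $a<d<b$. I would then compare $d$ and $c$ in $\leq$:
\begin{itemize}
\item If $d<c$, applying the CI-property to $a<d<c$ yields $d\preceq a\curlyvee c=a$, contradicting $a\prec d$.
\item If $c<d$, applying the CI-property to $c<d<b$ yields $d\preceq c\curlyvee b=b$, contradicting $b\prec d$.
\item If $d=c$, then $c=a\curlyvee b$ implies $a,b\preceq c$, so combined with $c\preceq a$ and $c\preceq b$ antisymmetry forces $a=b=c$, again contradicting $a\incomp b$.
\end{itemize}
In every case we reach a contradiction, so no $\preceq$-incomparable pair admits a common lower bound, and Lemma \ref{lem:linf} concludes the proof.

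The only delicate point is the middle case $a<c<b$, because here the CI-property alone does not directly compare $a$ and $b$; the idea is to introduce the auxiliary element $d=a\curlyvee b$ and exploit the fact that internality pins $d$ strictly between $a$ and $b$ in $\leq$, so that $d$ is in the CI-reach of either the pair $(a,c)$ or the pair $(c,b)$ and can therefore be forced to coincide with $a$ or $b$.
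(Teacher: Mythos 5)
Your proof is correct and follows essentially the same route as the paper's: reduce via Lemma \ref{lem:linf} to a $\preceq$-incomparable pair $a,b$ with common lower bound $c$, introduce $d=a\curlyvee b$, use internality (Lemma \ref{lem:int}) to pin $a<d<b$, and apply the CI-property to a triple involving $c$ and $d$ to reach a contradiction. The only differences are organizational: you split first on the position of $c$ relative to $a,b$ (handling the outer cases with the CI-property alone) and you treat the degenerate cases $c\in\{a,b,d\}$ explicitly, which the paper leaves implicit.
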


\begin{proof} We prove the contrapositive. Assume that $\preceq$ does not have the linear filter property. By Lemma \ref{lem:linf},  there are $\preceq$-incomparable elements $a, b$ in $X$ that have a lower bound $c$. Let us set $d=a\curlyvee b$ and  assume  that $a<b$. If $d<a$ or $b<d$, then $\preceq$ is not internal for $\leq$. Assume that $a<d<b$. If $c<a<d<b$ then $\preceq$ does not have the CI-property for $\leq$ since $d\not\preceq c\curlyvee b=b$. The cases $a<d<b<c$, $a<c<d<b$, and $a<d<c<b$ can be deal with similarly.
\end{proof}

\begin{rem}
The converse of Proposition \ref{lem:icintlin} does not hold. On the one hand, Example \ref{ex:base} shows an instance of a semilattice order that has the linear filter property but that is not internal for~a given total order $\leq$. On the other hand, if $\leq$ is the natural order on $X=\{1,2,3,4\}$, and if $\preceq$ is defined on $X$ by $1\preceq 3$, $4\preceq 3$, $3\preceq 2$, and $1\incomp 4$ then $\preceq$ has the linear filter property for $\leq$, but does not have the CI-property for~$\leq$.
\end{rem}

\section{Order-preserving semilattice operations on finite chains}\label{sec:finite}
Let $n\geq 0$ be an integer, and denote by $(X_n, \leq_n)$ a totally ordered set of cardinality $n$. We assume that $X_n=\{1, \ldots, n\}$ and $\leq_n$ is the natural order $1<_n 2 <_n \cdots <_n n$. In particular, we have $X_0=\varnothing$ and $\leq_0=\varnothing$. If no confusion arises, we often write $\leq$ for $\leq_n$.  In this section, we provide a characterization (Theorem \ref{thm:main}) of   semilattice operations that are $\leq_n$-preserving.  This characterization is stated in terms of  properties of the Hasse diagram of $\preceq$. In Subsection \ref{sec:enum}, we also prove that the number of semilattice operations that are $\leq_n$-preserving  is the $n^{\text{th}}$-Catalan number, providing yet another realization of the sequence of Catalan numbers. We also provide an algorithm to test associativity for idempotent, symmetric, and $\leq_n$-preserving operations. Finally, given a binary semilattice  order $\preceq$ on $X_n$,  we consider  in Subsection \ref{sec:const} the problem of constructing the total orders on $X_n$ for which $\preceq$ is nondecreasing.

In what follows, by  \emph{binary tree} we mean an unordered rooted tree in which every vertex has at most two children. A \emph{binary forest} is a graph whose connected components are binary trees.

\subsection{Hasse diagram of finite nondecreasing semilattice orders}\label{sec:hasse}
\begin{lem}\label{lem:tree}
Let $(X_n, \preceq)$ be a  semilattice. The following conditions are equivalent.
\begin{enumerate}[(i)]
\item\label{it:mfd01} The order $\preceq$ has the linear filter property and there are no pairwise incomparable elements $a, b, c$ of $X$ such that $a\curlyvee b=b\curlyvee c$.
\item\label{it:mfd02} The Hasse diagram of $(X,\preceq)$ is a binary tree.
\end{enumerate}
\end{lem}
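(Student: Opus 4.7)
The plan is to use the finiteness of $X_n$ to pick a natural root for the Hasse diagram: since any finite join-semilattice has a maximum element $M$ (the join of all of $X_n$), I will view the Hasse diagram as rooted at $M$, and identify the children of a vertex $x$ with the cocovers of $x$. Under this convention, \emph{being a binary tree} splits into two requirements: (a) the Hasse diagram is a tree as an undirected graph, and (b) every vertex has at most two cocovers. I will match (a) to the linear filter property and (b) to the ``no three pairwise incomparable $a,b,c$ with $a\curlyvee b = b\curlyvee c$'' condition.

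For (\ref{it:mfd01}) $\implies$ (\ref{it:mfd02}), I first observe that the linear filter property forces each filter $[x)_\preceq$ to be a chain, so each non-$M$ element has a unique cover; together with the connectedness (everything lies below $M$), this makes the Hasse diagram a tree rooted at $M$. Next, if some $x$ had three cocovers $a,b,c$, then $a,b,c$ are pairwise $\preceq$-incomparable (a comparability among them would destroy one of the covering relations), and $a\curlyvee b = b\curlyvee c = x$, because $x$ is a common upper bound and any strictly smaller upper bound would interpose an element between, say, $a$ and $x$, contradicting $a\precdot x$. This violates the second hypothesis of~(\ref{it:mfd01}).

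For (\ref{it:mfd02}) $\implies$ (\ref{it:mfd01}), I root the binary tree at $M$ and let, for any $x$, $x = x_0 \precdot x_1 \precdot \cdots \precdot x_k = M$ be the unique path from $x$ to the root. I will check that $[x)_\preceq = \{x_0,\ldots,x_k\}$: since any $z\succeq x$ can be connected to $x$ by a covering chain (finiteness), and each element has a unique parent in the rooted tree, an easy induction shows that such a covering chain is forced to be an initial segment of $x_0,\ldots,x_k$. This gives the linear filter property. Then, if $a,b,c$ were pairwise incomparable with $a\curlyvee b = b\curlyvee c = d$, I will identify the join in the semilattice with the lowest common ancestor in the rooted tree: the path from $a$ to $M$ and the path from $b$ to $M$ must first meet at $a\curlyvee b$. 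Thus $d$ is the LCA of $\{a,b\}$ and of $\{b,c\}$, and the paths from $a$, $b$, $c$ to $d$ enter $d$ through three distinct children of $d$, contradicting the binary condition.

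The only delicate point I anticipate is the identification of covering chains with tree paths in the second direction: I need to exclude the possibility that a $z\succeq x$ sits ``sideways'' in the tree relative to $x$. This is where I will use that the rooted-tree structure forces a unique parent, so any upward covering step from $x$ must coincide with $x\precdot x_1$, and iterating gives the claim. Everything else is bookkeeping about covers, joins, and LCAs.
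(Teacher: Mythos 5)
Your overall route is the paper's: root the Hasse diagram at the top element, identify children with lower covers, get tree-ness from the linear filter property (your unique-cover argument versus the paper's no-cycle argument is an immaterial variation), and derive binary-ness from the incomparability condition; your identification of $[x)_\preceq$ with the root-path, including the ``sideways'' issue you flag, is sound. The genuine gap is the final claim in (\ref{it:mfd02}) $\Rightarrow$ (\ref{it:mfd01}): that the paths from $a$, $b$, $c$ to $d=a\curlyvee b=b\curlyvee c$ enter $d$ through three \emph{distinct} children. The hypothesis only separates the adjacent pairs: if $a$ and $b$ entered through the same child $a'$, then $a'$ would be an upper bound of $\{a,b\}$ strictly below $a\curlyvee b$, and similarly for $b,c$; but nothing constrains $a\curlyvee c$, so $a$ and $c$ may well enter through a common child, and ``three distinct children'' does not follow.

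Moreover, this gap cannot be patched, because the implication is false as literally stated: take $X=\{a,b,c,p,q,d\}$ with covering relations $a\precdot p$, $c\precdot p$, $p\precdot d$, $b\precdot q$, $q\precdot d$. This is a join-semilattice, every filter is a chain, and the Hasse diagram rooted at the top $d$ is a binary tree; yet $a,b,c$ are pairwise incomparable with $a\curlyvee b=b\curlyvee c=d$ (while $a\curlyvee c=p$). To be fair, the paper's own proof of this direction makes the same silent distinctness assumption; what the argument actually proves---and the form supplied by the last part of Lemma \ref{lem:int} and used in Theorem \ref{thm:main}---is the statement with $a\curlyvee b=a\curlyvee c=b\curlyvee c$ in place of $a\curlyvee b=b\curlyvee c$. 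Under that three-way equality your LCA argument does close, since $a'=c'$ would place an upper bound of $\{a,c\}$ strictly below $a\curlyvee c=d$; so flag the distinctness step, strengthen the equality in (\ref{it:mfd01}) accordingly, and the rest of your proof stands.
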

\begin{proof}
(\ref{it:mfd01}) $\implies$ (\ref{it:mfd02}) The Hasse diagram $G$ of $(X,\preceq)$ is connected since $\preceq$ is a semilattice order. The existence of a cycle in $H$ would imply the existence of two $\preceq$-incomparable elements with a lower bound, in contradiction with the linear filter property by Lemma \ref{lem:linf}. We have proved that $G$ is a tree, and we consider the top element of $(X,\preceq)$ as the root of $G$. Now,  if  $d$ is a vertex of $G$ with at least three children $a,b,c$, then $a, b, c$ are three $\preceq$-incomparable elements such that $d=a\curlyvee b=b\curlyvee c$, in contradiction with (\ref{it:mfd01}). We have proved that $G$ is a binary tree.

(\ref{it:mfd02}) $\implies$ (\ref{it:mfd01}) We prove the contrapositive. If $(X,\preceq)$ does not have the linear filter property, then by Lemma \ref{lem:linf} there are two incomparable elements $a,b$ that have a lower bound $c$. Then $c$, $a$, $b$ and $a\curlyvee b$ are vertices of a cycle in the Hasse diagram $G$ of $(X,\preceq)$.

Now, assume that $X$ has three incomparable elements $a, b, c$ such that $a\curlyvee b=b\curlyvee c$. Then, there are children $a', b', c'$ of $a\curlyvee b=b\curlyvee c$ such that $a\preceq a'$, $b\preceq b'$, and $c\preceq c'$, and $G$ is not a binary tree.
\end{proof}

If $\preceq$ is a semilattice order on $X_n$ that satisfies one of the conditions of Lemma \ref{lem:tree} (for instance, if $\preceq$ is nondecreasing  for $\leq_n$), then the Hasse diagram of $(X_n,\preceq)$ is a binary tree $G$. In what follows, we assume that $G$ is rooted by the top element of $(X_n, \preceq)$.

\begin{lem}\label{lem:str}
Let $(X_n, \preceq)$ be a semilattice. If $\preceq$ has the CI-property for $\leq_n$,
 then the following conditions are equivalent.
\begin{enumerate}[(i)]
\item\label{it:lvf01} The order $\preceq$ is internal for $\leq_n$.
\item\label{it:lvf02} If $x'$ is a child of $x$, then $x=\min\{z | z >y \text{ for all } y \in (x']_\preceq\}$  or $x=\max\{z | z <y \text{ for all } y \ \in (x']_\preceq\}$.
\item\label{it:lvf03} If $x_1$ and $x_2$ are two chlidren of a vertex $x$  in the Hasse diagram of $(X_n, \preceq)$, then there are $i\neq j$ in $\{1,2\}$ such that $x$ is an upper bound of $(x_i]_\preceq$ and a lower bound of $(x_j]_\preceq$ in $(X_n, \leq_n)$.

\end{enumerate}
\end{lem}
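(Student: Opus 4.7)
The plan is to prove the cycle (i) $\implies$ (iii) $\implies$ (ii) $\implies$ (i), relying throughout on Lemma \ref{lem:ic} (convexity of principal ideals, together with the upper-or-lower-bound dichotomy of part (iv)) and on Lemma \ref{lem:int} (equivalent formulations of internality).

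For (i) $\implies$ (iii), I would take two children $x_1, x_2$ of $x$. As distinct children of the same vertex, $x_1$ and $x_2$ are $\preceq$-incomparable with join $x_1 \curlyvee x_2 = x$ (any common upper bound strictly below $x$ would contradict that $x$ covers them). Internality of $\curlyvee$ (Lemma \ref{lem:int}(ii)) yields $x_1 < x < x_2$ after ordering the two in $\leq$, and by Lemma \ref{lem:ic}(iv) this forces $x$ to be the $\leq$-upper bound of $(x_1]_\preceq$ and the $\leq$-lower bound of $(x_2]_\preceq$.

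For (iii) $\implies$ (ii), I would take a child $x'$ of $x$ and, by Lemma \ref{lem:ic}(iv), assume without loss of generality that $x$ is an $\leq$-upper bound of $(x']_\preceq$ (the lower-bound case is symmetric). Suppose for contradiction that $x$ fails to be the minimum such integer, so there is $z$ with $y < z < x$ for every $y \in (x']_\preceq$. Convexity of $(x]_\preceq$ (Lemma \ref{lem:ic}(iii)) gives $z \preceq x$, and I pick a child $z^*$ of $x$ above $z$. If $z^* = x'$, then $z \in (x']_\preceq$ although $z > z$, absurd; otherwise $x$ has two children $x', z^*$ and (iii) forces $x$ to be an $\leq$-lower bound of $(z^*]_\preceq$, contradicting $z \in (z^*]_\preceq$ with $z < x$.

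For (ii) $\implies$ (i), by Lemma \ref{lem:int} it suffices to show that $\curlyvee$ is internal. Given $a \leq b$, put $d = a \curlyvee b$ and assume for contradiction that $d > b$; since $a, b \prec d$, pick children $a^*, b^*$ of $d$ with $a \preceq a^*$ and $b \preceq b^*$. If $a^* = b^*$ then $a \curlyvee b \preceq a^* \prec d$, absurd. Otherwise, since $d > b \geq a$, condition (ii) forces $d$ to be the $\leq$-min-upper bound of both $(a^*]_\preceq$ and $(b^*]_\preceq$, so $d-1$ lies in each of these ideals; convexity of $(a^*]_\preceq$ combined with $a \leq b \leq d-1$ then gives $b \preceq a^*$, and once again $a \curlyvee b \preceq a^* \prec d$. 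The symmetric case $d < a$ is handled dually. I expect this last implication to be the main obstacle: here one must leverage the relatively weak min-upper/max-lower hypothesis of (ii) together with convexity of principal ideals to extract the decisive containment $b \preceq a^*$.
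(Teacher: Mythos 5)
Your proof is correct, but it runs the equivalence cycle in the opposite direction from the paper and therefore supplies two implications the paper never proves directly. The paper shows (i) $\Rightarrow$ (ii) by combining the upper/lower-bound dichotomy of condition (\ref{it:bfr04}) of Lemma \ref{lem:ic} with internality (splitting on whether $x$ has one or two children), dismisses (ii) $\Rightarrow$ (iii) as obvious, and obtains (iii) $\Rightarrow$ (i) in two lines through condition (\ref{it:mlo03}) of Lemma \ref{lem:int}. You instead prove (i) $\Rightarrow$ (iii) $\Rightarrow$ (ii) $\Rightarrow$ (i): the first step is essentially the paper's argument in compressed form (distinct children of $x$ are $\preceq$-incomparable with join $x$, internality places $x$ strictly between them in $\leq$, and the dichotomy of Lemma \ref{lem:ic} then decides which ideal $x$ bounds from above and which from below), but your (iii) $\Rightarrow$ (ii) and (ii) $\Rightarrow$ (i) are genuinely different arguments: both exploit the discreteness of the finite chain (an intermediate strict upper bound $z$, respectively the predecessor $d-1$ of $d=a\curlyvee b$) together with convexity of principal ideals from Lemma \ref{lem:ic} and the device of picking a child of the relevant top element above a given element; in particular, in (ii) $\Rightarrow$ (i) the minimality in (ii) forces $d-1\in(a^*]_\preceq$, and convexity of $(a^*]_\preceq$ with $a\leq b\leq d-1$ yields $b\preceq a^*$, whence $d\preceq a^*\prec d$, a contradiction. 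What your route buys is a direct verification that the weak-looking min/max condition (ii) already implies internality without passing through (iii); the price is that your two implications are longer than the paper's, which leans on the short (iii) $\Rightarrow$ (i) via Lemma \ref{lem:int}. Two cosmetic points: in (iii) $\Rightarrow$ (ii) the phrase ``$z\in(x']_\preceq$ although $z>z$'' should say that $z$ would be a member of $(x']_\preceq$ while also being a strict upper bound of it, which gives $z>z$; and in (ii) $\Rightarrow$ (i) only $d-1\in(a^*]_\preceq$ is needed, the membership in $(b^*]_\preceq$ is unused.
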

\begin{proof}
(\ref{it:lvf01}) $\implies$ (\ref{it:lvf02}) By Lemma \ref{lem:ic}, we have that $x$ is a lower bound or an upper bound of $(x']_\preceq$. Assume that $x$ is a lower bound of $(x']$ (the other case can be dealt with similarly). If $x$ has only one child, then by CI-property we have  $x=\max\{z | z <y \text{ for all } y  \in (x']_\preceq\}$. If $x$ has two children $x'$ and $x''$, then we obtain by internality that $y<x<z$ for every $y\preceq x''$ and $z\preceq x'$. If follows that $x=\max\{z | z <y \text{ for all } y \ \in (x']_\preceq\}$ and $x=\min\{z | z <y \text{ for all } y \ \in (x'']_\preceq\}$. 

(\ref{it:lvf02}) $\implies$ (\ref{it:lvf03}) Obvious.

(\ref{it:lvf03}) $\implies$ (\ref{it:lvf01}) We prove that $\preceq$ satisfies condition (\ref{it:mlo03}) of Lemma \ref{lem:int}. Let $x_1$ and $x_2$ be $\preceq$-incomparable elements, and assume that $x_1 <x_2$. Let $x'_1$ and $x'_2$ be the children of $x_1\curlyvee x_2$ such that $x_1\preceq x'_1$, and $x_2\preceq x'_2$. We obtain by (\ref{it:lvf02}) that $x_1\curlyvee x_2$ is an upper bound in $(X_n, \leq_n)$ of $(x'_1]_\preceq$ and a lower bound of $(x'_2]_\preceq$, which shows that $x_1<x_1 \curlyvee x_2<x_2$.
\end{proof}

The next result follows directly from Lemma \ref{lem:str}.

\begin{cor}\label{cor:main2}
 If $\preceq$ is a semilattice order that is nondecreasing for $\leq_n$, then its top element $r$ has only one child in the Hasse diagram of $(X_n,\preceq)$ if and only if $r\in\{1,n\}$.
\end{cor}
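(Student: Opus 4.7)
The plan is to deduce the corollary from Lemma \ref{lem:str} together with the observation that, under the hypothesis of nondecreasingness, the Hasse diagram of $(X_n,\preceq)$ is a binary tree whose root is the top element $r$. In particular, $r$ has at most two children, and this dichotomy structures the argument into the two implications.

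For the backward direction, I would proceed by contrapositive. Assume that $r$ has two children $x_1, x_2$ in the Hasse diagram. Since $\preceq$ is nondecreasing for $\leq_n$, it is in particular internal, so by the equivalence (\ref{it:lvf01})$\iff$(\ref{it:lvf03}) of Lemma \ref{lem:str}, there exist distinct $i, j \in \{1,2\}$ such that $r$ is an upper bound of $(x_i]_\preceq$ and a lower bound of $(x_j]_\preceq$ in $(X_n, \leq_n)$. Since $x_i \in (x_i]_\preceq$ and $x_i \neq r$, having $r$ as an upper bound of $(x_i]_\preceq$ forces $r \neq 1$; dually, having $r$ as a lower bound of $(x_j]_\preceq$ forces $r \neq n$. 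Hence $r \notin \{1,n\}$.

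For the forward direction, assume that $r$ has a unique child $x'$ in the Hasse diagram. Since $r$ is the top element and the diagram is a tree rooted at $r$, every element of $X_n\setminus\{r\}$ lies weakly below $x'$; that is,
\[
(x']_\preceq = X_n\setminus\{r\}.
\]
Applying Lemma \ref{lem:str}(\ref{it:lvf02}) to the pair $(r, x')$, either $r = \min\{z\mid z > y\text{ for all }y\in(x']_\preceq\}$ or $r = \max\{z\mid z < y\text{ for all }y\in(x']_\preceq\}$. In the first case, $r$ exceeds every element of $X_n\setminus\{r\}$ in $\leq_n$, so $r = n$; in the second case, $r$ is smaller than every element of $X_n\setminus\{r\}$ in $\leq_n$, so $r = 1$. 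In either case $r\in\{1,n\}$.

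No step should present any serious obstacle: the corollary really is a direct translation of Lemma \ref{lem:str} to the distinguished vertex $r$, exploiting that $r$'s children together cover all of $X_n\setminus\{r\}$ via principal ideals. The only mild care needed is to confirm that the Hasse diagram is indeed a binary tree so that ``has one child'' and ``has two children'' exhaust the nontrivial possibilities; this is guaranteed by Proposition \ref{lem:icintlin}, the final clause of Lemma \ref{lem:int}, and Lemma \ref{lem:tree}.
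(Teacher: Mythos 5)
Your proof is correct and follows the same route as the paper, which simply notes that the corollary is a direct consequence of Lemma \ref{lem:str}: you have spelled out that derivation, using condition (\ref{it:lvf03}) for the direction ``two children $\implies r\notin\{1,n\}$'' and condition (\ref{it:lvf02}) together with $(x']_\preceq=X_n\setminus\{r\}$ for the converse, with the binary-tree structure justified exactly as in the paper (Proposition \ref{lem:icintlin}, Lemma \ref{lem:int}, Lemma \ref{lem:tree}). No gaps worth noting.
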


As stated in the next result, a similar equivalence as in Lemma \ref{lem:str} holds for semilattice orders that satisfy the linear filter property.

\begin{lem}\label{lem:str02}
Let $\preceq$ be a semilattice order on $X_n$ that has the linear filter property. Then, conditions (\ref{it:lvf01}) and (\ref{it:lvf03}) of Lemma \ref{lem:str} are equivalent.
\end{lem}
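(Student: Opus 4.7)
The plan is to adapt the argument of Lemma \ref{lem:str}, replacing the use of the CI-property there with the tree structure of the Hasse diagram of $(X_n, \preceq)$ that is guaranteed here by the linear filter property. Specifically, Lemma \ref{lem:linf} ensures that no pair of $\preceq$-incomparable elements admits a common lower bound, so the Hasse diagram is a tree, which I root at the top element of $(X_n, \preceq)$. The key consequence I will exploit is that for any two distinct children $x_1, x_2$ of a vertex $x$, any element $y$ with $y \preceq x_1$ and $y \neq x_1$ is $\preceq$-incomparable with $x_2$ and satisfies $y \curlyvee x_2 = x$: this holds because $x$ is the least common ancestor of $y$ and $x_2$ in the tree, the only other candidate $y \curlyvee x_2 = x_2$ being ruled out by the linear filter property (it would make $y$ a common lower bound of the incomparable pair $\{x_1, x_2\}$).

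For (\ref{it:lvf01}) $\implies$ (\ref{it:lvf03}), I would fix two children $x_1, x_2$ of a vertex $x$ and assume without loss of generality that $x_1 < x_2$ in $\leq_n$. Internality applied to the pair $x_1 \leq x_2$ gives $x_1 \leq x \leq x_2$. Taking any $y \preceq x_1$, the identity $y \curlyvee x_2 = x$ together with internality applied to the natural-order pair $\{y, x_2\}$ yields $y \leq x$: if $y \leq x_2$ this is immediate, while $y > x_2$ would force $x_2 \leq x$, and combined with $x \leq x_2$ this would give $x = x_2$, contradicting $x_2 \prec x$. A symmetric treatment of elements $y \preceq x_2$ completes the verification of (\ref{it:lvf03}).

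For (\ref{it:lvf03}) $\implies$ (\ref{it:lvf01}), I will verify condition (\ref{it:mlo02}) of Lemma \ref{lem:int}. Fix $a < b$ in $\leq_n$; if $a$ and $b$ are $\preceq$-comparable the claim is immediate, so assume they are incomparable and set $d = a \curlyvee b$. In the tree, $a$ lies in the subtree rooted at some child $d_1$ of $d$, and $b$ lies in the subtree rooted at a different child $d_2$. Condition (\ref{it:lvf03}) produces a labelling under which $d$ is an upper bound of one child's downset and a lower bound of the other's; the labelling matching the positions of $a$ and $b$ yields $a \leq d \leq b$, whereas the opposite matching would give $b \leq d \leq a$, contradicting $a < b$. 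Hence $d \in [a, b]$ and $\preceq$ is internal for $\leq_n$.

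The main obstacle, I expect, is the identity $y \curlyvee x_2 = x$ in the forward direction: proving it requires combining the tree structure (so that the join lies on the $\preceq$-chain above $x_2$) with an application of the linear filter property to rule out the degenerate case $y \curlyvee x_2 = x_2$. Once this identity is in place, internality takes over to close both directions of the equivalence smoothly.
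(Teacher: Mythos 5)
Your proof is correct and follows essentially the same route as the paper: the forward direction combines internality with the linear filter property (disjointness of the two children's ideals, here packaged as the tree/lca identity $y\curlyvee x_2=x$, which the paper leaves implicit), and the converse reduces to Lemma \ref{lem:int} exactly as in Lemma \ref{lem:str}. The only cosmetic difference is that you verify condition (\ref{it:mlo02}) of Lemma \ref{lem:int} instead of condition (\ref{it:mlo03}).
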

\begin{proof}
(\ref{it:lvf01}) $\implies$ (\ref{it:lvf03}) By internality, we know that $x$ lies between $x_1$ and $x_2$ in $(X_n,\leq_n)$. Assume that $x_1 < x < x_2$ (the case  $x_2 < x < x_1$ is obtained by symmetry). By the linear filter property we have $(x_1]_{\preceq}\cap (x_2]_{\preceq} = \varnothing$. Also, by the internality condition, there is no $y,z \in X_n$ such that $y,z<x$ (resp. $y,z>x$), $y \in (x_1]_\preceq$, and $z \in (x_2]_\preceq$. It follows that $x$ is an upper bound of $(x_1]_\preceq$ and a lower bound of $(x_2]_\preceq$.

(\ref{it:lvf03}) $\implies$ (\ref{it:lvf01}) The proof is the same as in Lemma \ref{lem:str}.
\end{proof}

\begin{lem}\label{lem:bez}
Let $(X_n, \leq_n)$ be a finite chain, and $\preceq$ be a semilattice order on $X_n$ with top element $r$.
\begin{enumerate}
\item\label{it:lio01} If $\preceq$ has the CI-property for $\leq_n$, and if $r$ has only one child in the Hasse diagram of $(X_n,\preceq)$, then $r\in\{1,n\}$.
\item\label{it:lio02} If $\preceq$ is internal for $\leq_n$, and if $r$  is either $1$ or $n$, then $r$ has only one child in the Hasse diagram of $(X_n,\preceq)$.
\end{enumerate}
\end{lem}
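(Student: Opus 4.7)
My plan is to handle the two parts by independent arguments that convert the local hypothesis (CI-property or internality) into a constraint on the children of the top element $r$ in the Hasse diagram.

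For part~(\ref{it:lio01}), the first step is to observe that when $r$ has a unique child $r'$, the principal ideal $(r']_\preceq$ coincides with $X_n \setminus \{r\}$. Indeed, any $x \prec r$ lies on a saturated chain $x = y_0 \precdot y_1 \precdot \cdots \precdot y_k = r$ in the finite semilattice $(X_n,\preceq)$; the last step $y_{k-1} \precdot r$ forces $y_{k-1} = r'$ by uniqueness of the child, so $x \preceq r'$. Suppose now, for contradiction, that $1 <_n r <_n n$, and pick $a <_n r <_n c$ in $X_n$. Both $a$ and $c$ lie in $(r']_\preceq$, hence $a \curlyvee c \preceq r'$. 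But the CI-property applied to $a <_n r <_n c$ yields $r \preceq a \curlyvee c$, producing $r \preceq r'$, which contradicts $r' \prec r$; therefore $r \in \{1,n\}$.

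For part~(\ref{it:lio02}), I would translate internality of $\preceq$ for $\leq_n$ into internality of the join operation $\curlyvee$ via Lemma~\ref{lem:int}. Since both the hypothesis $r \in \{1,n\}$ and the conclusion are self-dual with respect to $\leq_n$, it suffices to treat the case $r = n$. Assume $r$ has two distinct children $x_1, x_2$ in the Hasse diagram; then $x_1$ and $x_2$ are $\preceq$-incomparable and $x_1 \curlyvee x_2 = r$. Taking $x_1 <_n x_2$, internality of $\curlyvee$ would force $x_1 \leq_n r \leq_n x_2$, which is impossible because $x_2 \neq r = n$ forces $x_2 <_n r$.

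The only step that deserves mild care is the Hasse-diagram observation used in part~(\ref{it:lio01}), namely that a unique child $r'$ of $r$ forces $(r']_\preceq = X_n \setminus \{r\}$; once that is in place, both parts collapse into short contradictions and I do not anticipate any genuine obstacle.
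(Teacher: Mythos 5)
Your proposal is correct and follows essentially the same route as the paper: both parts hinge on the observations that a unique child $r'$ of $r$ satisfies $(r']_\preceq = X_n\setminus\{r\}$, and that two distinct children of $r$ are $\preceq$-incomparable with join $r$, so internality places $r$ strictly between them in $(X_n,\leq_n)$. The only cosmetic difference is in part~(1): the paper invokes Lemma~\ref{lem:ic} to say that $(r']_\preceq$ is a convex subset of the chain with $n-1$ elements (forcing $r\in\{1,n\}$), whereas you apply the CI-property directly to $a<_n r<_n c$ to reach the same contradiction.
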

\begin{proof}
(\ref{it:lio01}) If $x$ is the child of $r$, then $(x]_\preceq$ is a convex subset of $(X_n,\leq_n)$ with $n-1$ elements.

(\ref{it:lio02}) We prove the contrapositive. Assume that $\preceq$ is internal for $\leq_n$ and that $x_1$ and $x_2$ are two children of $r$ in $(X_n,\preceq)$. By internality, we know that $x$ lies in between $x_1$ and $x_2$ in $(X_n,\leq)$, which shows that $x\not\in\{1,n\}$.
\end{proof}

The following result follows immediately from Lemmas \ref{lem:bez}, \ref{lem:str}, and \ref{lem:str02}.

\begin{cor}\label{cor:int}
Let $(X_n,\preceq)$ be a semilattice order  with top element $r$. Assume that $\preceq$ is internal  for $\leq_n$, and has the CI-property  for $\leq_n$ or the linear filter property. If $r$ has two children $x_1$, $x_2$ in the  Hasse diagram of $(X_n,\preceq)$, then $1$ and $n$ are $\preceq$-incomparable. Moreover, if $1\preceq x_1$ and $n\preceq x_2$, then $(x_1]_\preceq=\{1, 2, \ldots, r-1\}$ and $(x_2]_\preceq=\{r+1, r+2, \ldots, n\}$.
\end{cor}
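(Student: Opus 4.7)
The plan is to combine Lemmas~\ref{lem:bez}, \ref{lem:str}, and \ref{lem:str02}, with one small appeal to the last clause of Lemma~\ref{lem:int} for the ``covering'' step. I begin by noting that under either set of hypotheses, $\preceq$ has the linear filter property: trivially in the linear-filter case, and by Proposition~\ref{lem:icintlin} (since internal~$+$~CI $=$ nondecreasing) in the CI-property case. Since $r$ has two children, the contrapositive of Lemma~\ref{lem:bez}(\ref{it:lio02}) gives $r\notin\{1,n\}$, so the sets $\{1,\ldots,r-1\}$ and $\{r+1,\ldots,n\}$ partition $X_n\setminus\{r\}$.

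Next, applying Lemma~\ref{lem:str}(\ref{it:lvf03}) in the CI-property case or Lemma~\ref{lem:str02} in the linear-filter case to the children $x_1, x_2$ of $r$ yields, after a harmless relabeling, the upper/lower-bound dichotomy: $r$ is an upper bound of $(x_1]_\preceq$ and a lower bound of $(x_2]_\preceq$ in $(X_n,\leq_n)$. Since $r\notin(x_k]_\preceq$ (as $x_k\prec r$), this gives $(x_1]_\preceq\subseteq\{1,\ldots,r-1\}$ and $(x_2]_\preceq\subseteq\{r+1,\ldots,n\}$. I would then promote these inclusions to equalities by showing that $(x_1]_\preceq\cup(x_2]_\preceq=X_n\setminus\{r\}$: for $y\in X_n\setminus\{r\}$ with $y\not\preceq x_1$ and $y\not\preceq x_2$, the chain property of $[x_k)_\preceq$ together with the fact that $x_k$ is a child of $r$ rules out $x_k\preceq y$, so $y$ is $\preceq$-incomparable to each $x_k$. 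The child property then forces $y\curlyvee x_1=y\curlyvee x_2=x_1\curlyvee x_2=r$, contradicting the final clause of Lemma~\ref{lem:int}. Under the labeling $1\preceq x_1$, $n\preceq x_2$, this delivers the desired equalities $(x_1]_\preceq=\{1,\ldots,r-1\}$ and $(x_2]_\preceq=\{r+1,\ldots,n\}$.

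For the $\preceq$-incomparability of $1$ and $n$, I argue directly: if $1\preceq n$, then $x_1\in[1)_\preceq$ (from $1\preceq x_1$) and $x_2\in[1)_\preceq$ (from $1\preceq n\preceq x_2$), violating the linear filter property since distinct children $x_1, x_2$ of $r$ are necessarily $\preceq$-incomparable; the case $n\preceq 1$ is symmetric. I anticipate no serious obstacle; the only mildly delicate step is the covering argument, since it requires the final clause of Lemma~\ref{lem:int} rather than being a purely formal consequence of Lemmas~\ref{lem:bez}, \ref{lem:str}, and \ref{lem:str02} alone.
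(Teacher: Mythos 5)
Your proof is correct and follows essentially the same route as the paper, which derives the corollary directly from Lemmas \ref{lem:bez}, \ref{lem:str}, and \ref{lem:str02} (you merely write out the details, adding Proposition \ref{lem:icintlin} to get the linear filter property in the CI case). The only remark is that your covering step $(x_1]_\preceq\cup(x_2]_\preceq=X_n\setminus\{r\}$ does not really need the last clause of Lemma \ref{lem:int}: since $r$ is the top element of a finite semilattice, every $y\prec r$ lies below some element covered by $r$, i.e.\ below $x_1$ or $x_2$.
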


\begin{thm}\label{thm:main}
Let $(X_n, \leq_n)$ be a finite totally ordered set and $\preceq$ be a semilattice order on $X_n$. The following conditions are equivalent.
\begin{enumerate}[(i)]
\item\label{it:nse01} The order $\preceq$ is nondecreasing for~$\leq_n$.
\item\label{it:nse02} $(X,\preceq)$ is a binary semilattice that satisfies  condition  (\ref{it:lvf02}) of Lemma \ref{lem:str}. 
\end{enumerate}
\end{thm}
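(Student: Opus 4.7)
The plan is to combine the structural lemmas of Subsection~\ref{sec:hasse}: Lemma~\ref{lem:tree} characterizes binary semilattices, Lemma~\ref{lem:str} and Lemma~\ref{lem:str02} handle the various formulations of condition~(\ref{it:lvf02}), and Lemma~\ref{lem:ic} translates the CI-property into convexity of principal ideals.

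For (\ref{it:nse01}) $\implies$ (\ref{it:nse02}), I would first show that the Hasse diagram of $(X_n,\preceq)$ is a binary tree. Nondecreasingness and Proposition~\ref{lem:icintlin} give the linear filter property, so to invoke Lemma~\ref{lem:tree} it only remains to exclude pairwise $\preceq$-incomparable $a,b,c$ with $a\curlyvee b = b\curlyvee c$. Assuming WLOG $a<b<c$ in $\leq_n$, the CI-property yields $b \preceq a\curlyvee c$, so that $a\curlyvee c$ is an upper bound of $\{a,b,c\}$; combined with the fact that $a\curlyvee b = b\curlyvee c$ is already an upper bound of $\{a,c\}$, antisymmetry forces $a\curlyvee b = b\curlyvee c = a\curlyvee c$, contradicting the ``moreover'' clause of Lemma~\ref{lem:int} (which applies because $\preceq$ is internal). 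Condition~(\ref{it:lvf02}) of Lemma~\ref{lem:str} then follows directly from that lemma's implication (\ref{it:lvf01}) $\implies$ (\ref{it:lvf02}), which is available under the CI-property.

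For the converse (\ref{it:nse02}) $\implies$ (\ref{it:nse01}), Lemma~\ref{lem:tree} supplies the linear filter property. I would first deduce internality: condition~(\ref{it:lvf02}) of Lemma~\ref{lem:str} implies condition~(\ref{it:lvf03}) of the same lemma, since if two children $x_1,x_2$ of some $x$ both realized $x$ as ``$\min$'' (or both as ``$\max$''), then $(x_1]_\preceq$ and $(x_2]_\preceq$ would share the element $x-1$ (resp.~$x+1$), violating disjointness forced by the linear filter property; hence one child is of each type, giving (\ref{it:lvf03}), whence Lemma~\ref{lem:str02} yields internality. To obtain the CI-property, I would prove by induction on $|(x]_\preceq|$ that every principal ideal is a convex interval of $(X_n,\leq_n)$. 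The leaf case is trivial; the one-child case reduces, via condition~(\ref{it:lvf02}), to $x \in \{a-1, b+1\}$ when $(x']_\preceq = [a,b]$; in the two-children case with $(x_i]_\preceq = [a_i,b_i]$, the same-side configurations are again excluded by disjointness, forcing (up to swapping indices) $b_1+1 = x = a_2-1$ and hence $(x]_\preceq = [a_1, b_2]$. Lemma~\ref{lem:ic} then converts convexity of all principal ideals into the CI-property, completing the proof.

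The main delicate step is the two-children case of the induction, where one must rule out both children pushing $x$ to the same side of $\leq_n$. This is precisely where the linear filter property (via disjointness of principal ideals) does the essential work; once the two subtree-intervals are known to lie on opposite sides of $x$ with $x$ filling the gap between them, convexity of $(x]_\preceq$ is immediate.
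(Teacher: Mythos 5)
Your reverse direction is fine, and in fact it supplies exactly the detail (the induction showing every principal ideal is an interval, then Lemma~\ref{lem:ic}) that the paper compresses into a one-line citation; routing internality through Lemma~\ref{lem:str02} rather than Lemma~\ref{lem:str} is legitimate because the linear filter property comes for free from the binary tree. The genuine gap is in the forward direction, at the step where you try to exclude pairwise $\preceq$-incomparable $a,b,c$ with $a\curlyvee b=b\curlyvee c$. The reduction ``WLOG $a<b<c$ in $\leq_n$'' is not a loss of generality: the hypothesis $a\curlyvee b=b\curlyvee c$ singles out $b$, and nothing forces this shared element to be the $\leq_n$-middle one. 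Worse, the statement you are trying to establish (the second conjunct of condition (\ref{it:mfd01}) of Lemma~\ref{lem:tree}) is actually false for nondecreasing semilattice orders: the five-element example in the Remark following Lemma~\ref{lem:int} (chain $a<e<c<d<b$, with $e$ covering $a,c$ and $d$ covering $e,b$) is internal and, as one checks directly, also has the CI-property for that chain, yet $a,c,b$ are pairwise incomparable with $a\curlyvee b=c\curlyvee b=d$ while $a\curlyvee c=e\neq d$. There the shared element is $b$, an endpoint of the chain, so no relabelling rescues your WLOG, and your intended conclusion that all three pairwise joins coincide fails. Consequently condition (\ref{it:mfd01}) of Lemma~\ref{lem:tree} cannot be verified from nondecreasingness, and the black-box appeal to its implication (\ref{it:mfd01})$\implies$(\ref{it:mfd02}) is blocked (the paper's own terse citation of Lemma~\ref{lem:tree} has to be read with the same caution).

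The repair is local. To conclude that the Hasse diagram is a binary tree you only need to exclude a vertex $d$ with three children: distinct children of $d$ are pairwise incomparable, and for two children $x,y$ one has $x\preceq x\curlyvee y\preceq d$ with $x\curlyvee y\neq x$, so $x\curlyvee y=d$ because $d$ covers $x$; hence three children would produce three pairwise incomparable elements whose pairwise joins \emph{all} coincide, which is exactly what the ``moreover'' clause of Lemma~\ref{lem:int} rules out under internality (there the normalization $a<b<c$ is harmless precisely because all three joins are assumed equal). Acyclicity follows from the linear filter property via Lemma~\ref{lem:linf}, as in the proof of Lemma~\ref{lem:tree}, and connectedness is automatic for a semilattice. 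With the binary tree obtained this way, your remaining step, namely Lemma~\ref{lem:str} (\ref{it:lvf01})$\implies$(\ref{it:lvf02}) under the CI-property, goes through unchanged, and the rest of your argument stands.
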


\begin{proof}
(\ref{it:nse01}) $\implies$ (\ref{it:nse02}) follows from  Lemmas  \ref{lem:int},  \ref{lem:icintlin}, \ref{lem:tree}, and \ref{lem:str}.

(\ref{it:nse02}) $\implies$ (\ref{it:nse01}) By Lemma \ref{lem:ic} we obtain that $\preceq$ has the CI-property for $\leq_n$. It follows by Lemma \ref{lem:str} that $\preceq$ is internal for $\leq_n$.
%
%
\end{proof}

\begin{ex}\label{ex:main}
Let $\preceq$ be a semilattice order that is nondecreasing for $\leq_4$. According to Theorem \ref{thm:main}, its Hasse diagram is isomorphic to one of the binary trees depicted in  Fig.~\ref{fig:x4}, and $\preceq$ is one of the  orders defined by  the following labellings in  Fig.~\ref{fig:x4}: $(u,v,w,r)\in\{(1,3,2,4), (2,4,3,1)\}$, or \[(x,y,z,t)\in \{(3,4,1,2), (4,3,1,2),(1,2,4,3),(2,1,4,3)\},\] or
\begin{multline*}
(a,b,c,d)\in\{(1,2,3,4), (2,1,3,4), (2,3,1,4), (2,3,4,1),\\ (3,4,2,1), (3,2,1,4), (3,2,4,1), (4,3,2,1)\}.
\end{multline*}

\begin{figure}
\hfill
\begin{tikzpicture}[scale=\tkzscl]
\draw (0,0) node{$\bullet$} node[left]{$a$} -- (0,1) node{$\bullet$} node[left]{$b$} -- (0,2) node{$\bullet$} node[left]{$c$} -- (0,3) node{$\bullet$} node[left]{$d$} ;
\end{tikzpicture}
\hfill
\begin{tikzpicture}[scale=\tkzscl]
\draw (0,0) node{$\bullet$} node[above]{$t$} -- (-1,-1) node{$\bullet$} node[left]{$z$};
\draw (0,0) -- (1,-1) node{$\bullet$} node[right]{$y$} -- (1,-2) node{$\bullet$} node[right]{$x$} ;
\end{tikzpicture}
\hfill
\begin{tikzpicture}[scale=\tkzscl]
\draw (0,0) node{$\bullet$} node[above]{$r$} -- (0,-1) node{$\bullet$} node[left]{$w$}  -- (-1,-2) node{$\bullet$} node[left]{$u$};
\draw (0,-1) -- (1,-2) node{$\bullet$} node[right]{$v$} ;
\end{tikzpicture}
\hfill
\caption{Hasse diagrams of semilattices that are  nondecreasing for $\leq_4$.}\label{fig:x4}
\end{figure}
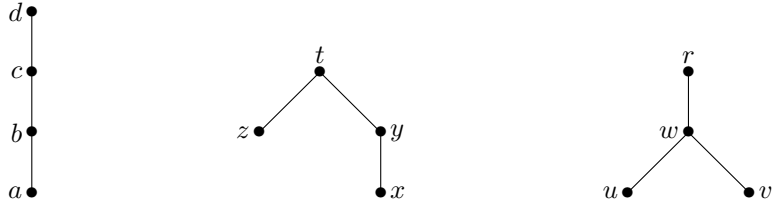
\end{ex}

Recall that  $e\in X_n$ is a \emph{neutral element} of an operation $F\colon X_n^2 \to X_n$ if $F(x,e)=F(e,x)=x$ for every $x\in X$. Observe that a finite semilattice $(X_n, \preceq)$ has a neutral element $e$ if and only if $e$ is a lower bound of $X_n$. The following result follows from the latter observation and Theorem \ref{thm:main}.

\begin{cor}
Let $\curlyvee\colon X_n^2 \to X_n$ be a $\leq_n$-preserving  semilattice operation. Then, $\curlyvee$ has a neutral element if and only if its associated order is a total order that is single-peaked for $\leq_n$.
\end{cor}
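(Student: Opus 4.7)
The plan is to leverage the observation stated just before the corollary (neutral element $=$ lower bound of the semilattice) together with the structural description provided by Theorem~\ref{thm:main}, which guarantees that the Hasse diagram of $\preceq$ is a binary tree rooted at the top element whenever $\curlyvee$ is $\leq_n$-preserving.

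For sufficiency, I would assume that the associated order $\preceq$ is a total order that is single-peaked for $\leq_n$. By the remark following Definition~\ref{def:ic} (or by Fact~\ref{fact:res}), such a total order is in particular nondecreasing for $\leq_n$, so Theorem~\ref{cor:main} confirms that its join operation $\curlyvee$ is $\leq_n$-preserving. Being a total order on a finite set, $\preceq$ has a minimum element $e$, which is a lower bound of $X_n$, and the stated observation immediately gives that $e$ is a neutral element of $\curlyvee$.

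For necessity, I would assume that $\curlyvee$ has a neutral element $e$. The preceding observation says that $e$ is a lower bound of $(X_n,\preceq)$, so $e$ is the bottom of the semilattice. By Theorem~\ref{thm:main}, the Hasse diagram $G$ of $(X_n,\preceq)$ is a binary tree rooted at the top element of $(X_n,\preceq)$. The key structural point is that a finite rooted binary tree with a unique leaf must be a path: if some internal vertex had two children, then each of its two subtrees would contain at least one leaf, producing at least two leaves distinct from any vertex outside these subtrees. Since every $\preceq$-descending chain from any vertex of $G$ must terminate at $e$, the tree $G$ has $e$ as its unique leaf, and therefore is a path. Hence $\preceq$ is a total order.

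The last step is to upgrade "total order" to "single-peaked for $\leq_n$". Since $\preceq$ is nondecreasing for $\leq_n$ by hypothesis and is now known to be a total order, Fact~\ref{fact:res} (applied with $P=X_n$) yields that $\preceq$ is single-peaked for $\leq_n$ in the sense of \cite{Bla48,Bla87}. The only mildly delicate step is the tree-theoretic lemma that a rooted binary tree with a unique leaf is a path; everything else is a direct invocation of results already established in the excerpt.
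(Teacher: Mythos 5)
Your proof is correct and follows essentially the route the paper intends: the corollary is stated to follow from the observation that a neutral element is precisely a $\preceq$-lower bound of $X_n$ together with Theorem \ref{thm:main}, and your tree-theoretic filling (the bottom element is the unique leaf of the rooted binary tree, so the tree is a path) plus Fact \ref{fact:res} supplies exactly the details left implicit. A marginally shorter way to get totality is via Proposition \ref{lem:icintlin}: $X_n=[e)_\preceq$ is a filter, hence totally ordered by the linear filter property.
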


Theorem \ref{thm:main} enables us to give the isomorphism types of semilattices that are nondecreasing for $\leq_n$.
\begin{cor} \label{cor:type}
The isomorphism types of semilattices that are nondecreasing for $\leq_n$ and the isomorphism types of semilattices that have the linear filter property and are internal for $\leq_n$ coincide, and are the binary trees.
\end{cor}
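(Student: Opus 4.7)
The plan is to prove the coincidence of the three families of isomorphism types by establishing a cycle of inclusions. Let $\mathcal{A}_n$ denote the isomorphism types of semilattices on $X_n$ that are nondecreasing for $\leq_n$, let $\mathcal{B}_n$ denote those that have the linear filter property and are internal for $\leq_n$, and let $\mathcal{T}_n$ denote the isomorphism types of rooted binary trees with $n$ vertices. I will verify $\mathcal{A}_n \subseteq \mathcal{B}_n \subseteq \mathcal{T}_n \subseteq \mathcal{A}_n$.

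The first inclusion $\mathcal{A}_n \subseteq \mathcal{B}_n$ is essentially free: Proposition \ref{lem:icintlin} shows that a nondecreasing semilattice order has the linear filter property, and internality is built into the definition of nondecreasing. For $\mathcal{B}_n \subseteq \mathcal{T}_n$, I would start with $(X_n, \preceq)$ having the linear filter property and internal for $\leq_n$. By Lemma \ref{lem:linf}, no two $\preceq$-incomparable elements share a common lower bound; since a cycle in the Hasse diagram of a connected semilattice would exhibit such a pair at a local minimum of the cycle, the Hasse diagram must be a tree, naturally rooted at the top element. If some vertex $d$ had three children $x_1,x_2,x_3$, they would be pairwise $\preceq$-incomparable with $x_i\curlyvee x_j=d$ for every $i\neq j$, contradicting the ``moreover'' clause of Lemma \ref{lem:int} that follows from internality. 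Hence the Hasse diagram is a rooted binary tree.

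The substantive step is $\mathcal{T}_n \subseteq \mathcal{A}_n$: for each rooted binary tree $T$ with $n$ vertices, I must exhibit a labeling $T \to \{1,\ldots,n\}$ whose induced semilattice is nondecreasing for $\leq_n$. I would proceed by induction on $n$, using a ``middle element'' recursive labeling. If the root $r$ has a single child whose subtree has $n-1$ vertices, label that subtree nondecreasingly with $\{1,\ldots,n-1\}$ by the induction hypothesis and set $r=n$. If $r$ has two children whose subtrees $T_1,T_2$ have sizes $k$ and $n-1-k$, label $T_1$ nondecreasingly with $\{1,\ldots,k\}$, label $T_2$ nondecreasingly with $\{k+2,\ldots,n\}$ (via the induction hypothesis and the obvious order-preserving shift), and set $r=k+1$. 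A straightforward induction then shows that for every $x$ with a child $x'$, the principal ideal $(x']_\preceq$ is a contiguous block of $\leq_n$ and $x$ lies immediately above or immediately below this block, so condition (ii) of Lemma \ref{lem:str} holds at every vertex. Theorem \ref{thm:main} then certifies that the resulting semilattice is nondecreasing for $\leq_n$, closing the cycle.

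The main difficulty lies in this third inclusion: one must commit to a concrete recursive labeling scheme and verify that the ``extremal'' geometric condition from Lemma \ref{lem:str} propagates through the recursion, which requires tracking that principal ideals remain contiguous blocks after each relabeling step.
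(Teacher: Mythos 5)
Your proposal is correct and takes essentially the same approach as the paper: both reduce the two classes to binary trees via Proposition \ref{lem:icintlin} and Lemmas \ref{lem:int}, \ref{lem:linf} and \ref{lem:tree}, and both use the identical recursive labeling of a binary tree (root labeled $n$ when it has one child, or $|C_1|+1$ sandwiched between the two subtree blocks $\{1,\ldots,|C_1|\}$ and $\{|C_1|+2,\ldots,n\}$). Your explicit verification of condition (\ref{it:lvf02}) of Lemma \ref{lem:str} followed by Theorem \ref{thm:main} merely spells out the step the paper delegates to the induction hypothesis.
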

\begin{proof}
It follows from Theorem  \ref{thm:main}  and Lemmas \ref{lem:int}  and  \ref{lem:tree} that any semilattice that  is nondecreasing for $\leq_n$, or that has the linear filter property and is internal for $\leq_n$ is a binary semilattice. Since any semilattice that is nondecreasing for $\leq_n$ is internal for $\leq_n$ and has the linear filter property, it suffices to show that if $G$ is a binary tree with $n$ vertices then there is a labeling of the vertices turning $G$ into the Hasse diagram of a semilattice that is nondecreasing for $\leq_n$. We proceed by induction on $n\geq 1$. For the induction step, if the root $r$ of $G$ has only one child, then we define a labeling of the vertices of $G$ by labeling $r$ with $n$, and labeling the vertices of $G-r$ with $1, \ldots, n-1$ using induction hypothesis. If $r$ has two children $x_1$ and $x_2$, let $C_i$ be the connected component of $G-r$ that contains $x_i$ for $i\in\{1,2\}$. We define a labeling of $G$ by labeling $r$ by $|C_1|+1$, and we label the vertices of $C_1$ and $C_2$ by $1, \ldots, |C_1|$ and $|C_1|+2, \ldots, n$, respectively, using induction hypothesis.
\end{proof}

 We apply Theorem \ref{thm:main}  to obtain the characterization of the  ``smooth''  semilattice operations given in \cite{Fodor2000}.

\begin{defn}[{{{{\cite{Fodor2000}}}}}]
An operation $F\colon X_n^2\to X_n$ is \emph{smooth} if $F(x,y)  \leq_n F(x+1,y) \leq_n F(x,y)+1$ for every $x, y \in X_n$ such that $x\neq n$, and  $F(x,y)  \leq_n F(x,y+1) \leq_n F(x,y)+1$ for every $x, y \in X_n$ such that $y\neq n$.
\end{defn}

Smooth semilattice operations can be characterized in terms of their associated order as follows.
\begin{cor}[{\cite{Fodor2000}}]
A semilattice operation $F\colon X_n^2 \to X_n$ is  smooth if and only if there exists $a\in X_n$ such that
\begin{gather*}
1 \prec_F 2 \prec_F \cdots \prec_F a-1\prec_F a,\\
n \prec_F n-1 \prec_F \cdots \prec_F a+1\prec_F a,
\end{gather*}
and $1\incomp n$.
\end{cor}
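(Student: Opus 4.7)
The plan is to reduce the problem to Theorem~\ref{thm:main} via the observation that every smooth operation is $\leq_n$-preserving: the defining inequalities $F(x,y)\leq F(x+1,y)\leq F(x,y)+1$ imply in particular $F(x,y)\leq F(x+1,y)$, and symmetrically in the second argument. Hence by Theorem~\ref{thm:main}, $\preceq_F$ is a nondecreasing semilattice order, its Hasse diagram is a binary tree, and I may appeal freely to the structural lemmas of Section~\ref{sec:hasse}. The forward direction then reduces to showing that smoothness collapses this tree to a ``V'' with arms $1\prec_F 2\prec_F\cdots\prec_F a$ and $n\prec_F n-1\prec_F\cdots\prec_F a$.

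For the forward direction, I would encode the order by the sequence $\sigma\in\{0,1\}^{n-1}$ with $\sigma(i)=F(i,i+1)-i$. This is well-defined because smoothness together with idempotency forces $F(i,i+1)\in\{i,i+1\}$: indeed $F(i,i+1)\leq F(i+1,i+1)=i+1$ and $F(i+1,i+1)\leq F(i,i+1)+1$. In these terms $\sigma(i)=1$ records $i\prec_F i+1$ while $\sigma(i)=0$ records $i+1\prec_F i$. The key claim is that $\sigma$ has the unimodal shape $1^{a-1}0^{n-a}$ for some $a\in X_n$. To prove it, I would suppose for contradiction that $\sigma(i)=0$ and $\sigma(i+1)=1$ for some $i$, and apply smoothness twice to pin down $F(i,i+2)$: from $F(i,i+1)=i$ I obtain $F(i,i+2)\in\{i,i+1\}$, while from $F(i+1,i+2)=i+2$ I obtain $F(i,i+2)\in\{i+1,i+2\}$, so $F(i,i+2)=i+1$. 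This forces $i\curlyvee (i+2)=i+1$, hence in particular $i\preceq_F i+1$; combined with $i+1\preceq_F i$ coming from $\sigma(i)=0$, this yields $i=i+1$, a contradiction. The unimodal shape of $\sigma$ translates exactly into the two chains of the statement meeting at the common top $a$.

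For the converse, I would verify smoothness from the V-shape by direct computation. In that semilattice, $F(i,j)$ is the $\preceq_F$-least common ancestor of $i$ and $j$, so $F(i,j)=\max(i,j)$ when $i,j\leq a$, $F(i,j)=\min(i,j)$ when $i,j\geq a$, and $F(i,j)=a$ when $i$ and $j$ lie on opposite sides of $a$. A short case analysis on the position of the incremented argument relative to $a$ confirms that $F(x,y)\leq F(x+1,y)\leq F(x,y)+1$ whenever $x\neq n$, and symmetrically in the second argument. The condition $1\incomp n$ is automatic when $1<a<n$; the extremal cases $a=1$ and $a=n$ degenerate to a single chain, which also gives a smooth operation.

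The main obstacle is the unimodality argument: selecting the correct pair of smoothness inequalities that pin $F(i,i+2)=i+1$, and then extracting the contradiction from the semilattice axioms. Everything else is structural bookkeeping underwritten by Theorem~\ref{thm:main} and Lemma~\ref{lem:str}.
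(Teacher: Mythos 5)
Your proof is correct, and your necessity argument takes a genuinely different route from the paper's. The paper works with covering relations: assuming $x\prec_F y$ with $x\notin\{y-1,y+1\}$, it uses smoothness and internality to get $F(x,y-1)=y-1$ and monotonicity to get $F(y-1,y)=y$, so that $x\prec_F y-1\prec_F y$; hence every Hasse edge joins consecutive integers, and the V-shape is then assembled (implicitly) from Theorem \ref{thm:main} and Lemma \ref{lem:str}. You instead encode the order locally by $\sigma(i)=F(i,i+1)-i\in\{0,1\}$ and exclude the pattern $\sigma(i)=0,\ \sigma(i+1)=1$ by pinning $F(i,i+2)=i+1$ through two applications of smoothness, which together with $i+1\preceq_F i$ contradicts antisymmetry; unimodality of $\sigma$ then yields the two chains directly. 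What your route buys: despite your framing, the core argument never actually needs Theorem \ref{thm:main} or the binary-tree machinery --- it is self-contained, using only smoothness, idempotency and the semilattice order. What the paper's route buys: a single, shorter perturbation argument that plugs straight into the structural results already proved. Two points worth tightening. First, the condition $1\incomp n$ belongs to the necessity direction (your remark about it sits in the converse, where it is a hypothesis): for $1<a<n$ it follows in one line, since $1\preceq_F n$ would give $n=F(1,n)\leq F(a,n)=a$ by monotonicity and the right-hand chain, contradicting $a<n$, and dually for $n\preceq_F 1$; the degenerate cases $a\in\{1,n\}$ are glossed over by the statement itself, and your reading of them matches the paper's. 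Second, in the converse you (like the paper, whose proof is just ``Obvious'') read the displayed chains as describing the whole order $\preceq_F$, which is the intended interpretation; under that reading your explicit $\max$/$\min$/$a$ formula and case check is precisely the verification the paper omits, so this is an improvement rather than a gap.
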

\begin{proof}
(Necessity) By Theorem \ref{thm:main}, it suffices to prove that if $x\prec y$ and there is no $z$ such that $x\prec z \prec y$ then $x\in\{y-1,y+1\}$. We prove the contrapositive. Assume that $x\not\in \{y-1,y+1\}$ and $x\prec y$. We can assume that $x=y-k$ for $k\geq 2$ (the case $x=y+k$ for $k\geq 2$ can be dealt with similarly). We have $F(y-k,y)=y$, so by smoothness and internality $F(y-k,y-1)=y-1$. Since $F$ is idempotent and  $\leq_n$-preserving, we also obtain $F(y-1,y)=y$. It follows that $x\prec y-1\prec y$.

(Sufficiency) Obvious.

\end{proof}

\subsection{Contour plots of idempotent operations}
Let $F\colon X_n^2 \to X_n$ be an idempotent operation.
The \emph{contour plot} of  $F$ is the simple graph $(X_n^2, E)$ where $E$ contains an edge between two distinct vertices $(x,y)$ and $(z,t)$ if and only $F(x,y)=F(z,t)$. In a drawing of a contour plot, we do not draw edges that can be obtained from existing ones by transitivity.
For every $z\in X$, we denote by $\deg_F(z)$ the number of elements in $F^{-1}(z)$.
It is convenient to define an idempotent operation by  providing a drawing of its contour plot. For instance, the operation $F\colon X_4^2 \to X_4$ defined by $F(1,2)=F(2,1)=F(1,1)=1$, $F(2,3)=F(3,2)=F(3,3)=3$, $F(1,3)=F(3,1)=F(2,2)=2$, and $F(x,4)=F(4,x)=4$ for $x=1,2,3,4$ is symmetric and idempotent. Its contour plot is depicted in Figure~\ref{fig:count}.

\setlength{\unitlength}{5.0ex}
\begin{figure}[htbp]
\begin{center}
\begin{small}
\begin{picture}(6,6)
\put(0.5,0.5){\vector(1,0){5}}\put(0.5,0.5){\vector(0,1){5}}
\multiput(1.5,0.45)(1,0){4}{\line(0,1){0.1}}%
\multiput(0.45,1.5)(0,1){4}{\line(1,0){0.1}}%
\put(1.5,0){\makebox(0,0){$1$}}\put(2.5,0){\makebox(0,0){$2$}}\put(3.5,0){\makebox(0,0){$3$}}\put(4.5,0){\makebox(0,0){$4$}}
\put(0,1.5){\makebox(0,0){$1$}}\put(0,2.5){\makebox(0,0){$2$}}\put(0,3.5){\makebox(0,0){$3$}}\put(0,4.5){\makebox(0,0){$4$}}
\multiput(1.5,1.5)(0,1){4}{\multiput(0,0)(1,0){4}{\circle*{0.2}}}
\drawline[1](2.5,1.5)(1.5,1.5)(1.5,2.5)\drawline[1](2.5,2.5)(1.5,3.5)(3.5,1.5)\drawline[1](3.5,2.5)(3.5,3.5)(2.5,3.5)\drawline[1](1.5,4.5)(4.5,4.5)(4.5,1.5)
\put(1.75,1.75){\makebox(0,0){$1$}}\put(2.75,2.75){\makebox(0,0){$2$}}\put(3.75,3.75){\makebox(0,0){$3$}}\put(4.75,4.75){\makebox(0,0){$4$}}
\end{picture}
\end{small}
\caption{A symmetric and idempotent operation on $X_4$ that is not associative (contour plot)}
\label{fig:count}
\end{center}
\end{figure}
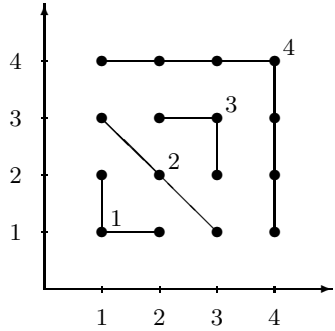


Recall that $a\in X$ is called a \emph{zero element} of an operation $F\colon X^2 \to X$ if $F(a,x)=F(x,a)$ for every $x\in X$.
\begin{rem}\begin{enumerate}\renewcommand{\itemsep}{0.5em}
\item Let $F\colon X_n^2 \to X_n$ be an idempotent and symmetric operation. If $F$ is associative, then the top element of $(X,\preceq_F)$ is a zero element of $F$. It is not difficult to see that the converse statement holds for every $n\leq 3$. However, it is not true in general. Consider for instance the operation $F\colon X_4 \to X_4$ defined in Fig.~\ref{fig:count}. The operation $F$ is idempotent and symmetric, and has 4 as the zero element. But it is not associative since $F|_{X_3^2}\colon X_3^2 \to X_3$ has no zero element.

\item Let $F\colon X_n^2 \to X_n$ be an idempotent operation. If $a\in X_n$ is a zero element of $F$, then $\deg_F(a)\geq 2n$. The converse statement does not hold. For instance, consider the idempotent and $\leq_n$-preserving operation $F\colon X_3^2 \to X_3$ whose contour plot is depicted in Figure \ref{fig:2}. We see that $\deg_F(2)=5$ but $F$ has no zero element.
\setlength{\unitlength}{5.0ex}
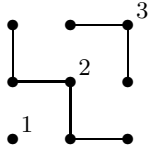
\begin{figure}[htbp]
\begin{center}
\begin{small}
\begin{picture}(3,3)
\multiput(0.5,0.5)(0,1){3}{\multiput(0,0)(1,0){3}{\circle*{0.2}}}
\drawline[1](0.5,2.5)(0.5,1.5)\drawline[1](0.5,1.5)(1.5,1.5)\drawline[1](1.5,1.5)(1.5,0.5)\drawline[1](1.5,0.5)(2.5,0.5)\drawline[1](1.5,2.5)(2.5,2.5)\drawline[1](2.5,1.5)(2.5,2.5)
\put(0.75,0.75){\makebox(0,0){$1$}}\put(1.75,1.75){\makebox(0,0){$2$}}\put(2.75,2.75){\makebox(0,0){$3$}}
\end{picture}
\end{small}
\caption{An idempotent and $\leq_3$-preserving operation on $X_3$}
\label{fig:2}
\end{center}
\end{figure}
\end{enumerate}

\end{rem}

The following Lemma shows how to recognize zero elements of $\leq_n$-preserving binary operations.
\begin{lem}\label{lem:dse}
Let $F\colon X_n^2 \to X_n$ be a $\leq_n$-preserving operation, and let $a\in X_n$.
\begin{enumerate}
\item\label{it:mpp01} The element $a$ is a zero element of $F$ if and only if $F|_{[1,a]\times [a,n]}$ and $F|_{[a,n]\times [1,a]}$ are the constant functions $(x,y)\mapsto a$.
\item \label{it:mpp01bis} If $F$ is idempotent, then $F(x,y)< a$ for every $(x,y) \in  [1,a-1]\times[1,a-1]$  and $F(x,y)>a$  for every $(x,y)\in [a+1,n]\times [a+1,n]$.
\item\label{it:mpp02} If $F$ is idempotent, then $a$ is a zero element of $F$ if and only if \begin{equation}\label{eqn:deg}
\deg_F(a)=2a(n-a+1)-1.
\end{equation}
\end{enumerate}
\end{lem}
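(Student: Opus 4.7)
The plan is to prove the three parts in sequence, with (3) following from a counting argument that uses both (1) and (2).

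For part \textbf{(1)}, necessity comes from a sandwich: if $a$ is a zero element and $(x,y)\in [1,a]\times[a,n]$, then $x\le a\le y$ together with $\leq_n$-preservation gives $a = F(x,a) \le F(x,y) \le F(a,y) = a$, so $F(x,y)=a$; the argument on $[a,n]\times[1,a]$ is symmetric. Sufficiency is immediate, since every pair $(a,x)$ and $(x,a)$ lies in one of the two rectangles and is thus mapped to $a$.

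Part \textbf{(2)} follows at once from idempotency and $\leq_n$-preservation: for $(x,y)\in[1,a-1]\times[1,a-1]$ we have $F(x,y)\le F(a-1,a-1)=a-1<a$, and for $(x,y)\in[a+1,n]\times[a+1,n]$ we have $F(x,y)\ge F(a+1,a+1)=a+1>a$.

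For part \textbf{(3)}, I first compute the size of the ``cross'' region $R = ([1,a]\times[a,n])\cup([a,n]\times[1,a])$ by inclusion-exclusion: each rectangle has $a(n-a+1)$ elements and they meet in the single point $(a,a)$, so $|R| = 2a(n-a+1)-1$. The key observation is that the complement $X_n^2\setminus R$ equals $([1,a-1]\times[1,a-1])\cup([a+1,n]\times[a+1,n])$, and by part (2) this complement contains no element of $F^{-1}(a)$. Hence $F^{-1}(a)\subseteq R$ holds unconditionally (for idempotent, $\leq_n$-preserving $F$). If $a$ is a zero element, part (1) gives the reverse inclusion $R\subseteq F^{-1}(a)$, so $\deg_F(a)=|R|=2a(n-a+1)-1$. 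Conversely, if $\deg_F(a) = 2a(n-a+1)-1$, then $F^{-1}(a)\subseteq R$ together with $|F^{-1}(a)|=|R|$ forces $F^{-1}(a)=R$; in particular $F$ is constantly $a$ on each of the two rectangles, and part (1) yields that $a$ is a zero element.

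The main (and essentially only) point requiring care is the bookkeeping of the partition $X_n^2 = R \sqcup [1,a-1]^2 \sqcup [a+1,n]^2$ (with the cardinalities $(a-1)^2 + (n-a)^2 + 2a(n-a+1)-1 = n^2$ serving as a sanity check); once this is in hand, the equivalence in (3) is a clean cardinality argument that reduces to (1) and (2).
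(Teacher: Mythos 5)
Your proposal is correct and follows essentially the same route as the paper: the sandwich argument for (1), monotonicity plus idempotency for (2), and for (3) the identification $F^{-1}(a)=\bigl([1,a]\times[a,n]\bigr)\cup\bigl([a,n]\times[1,a]\bigr)$ with one inclusion from (1) (or the degree count) and the other from (2), finished by counting. Your version merely makes the cardinality bookkeeping and the final appeal to (1) in the sufficiency direction more explicit than the paper does.
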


\begin{proof}
(\ref{it:mpp01})
(Necessity) Let $(x,y) \in [1,a]\times [a,n]$ (the other case can be dealt with similarly). Since $F$ is $\leq_n$-preserving, we get
\[
a = F(x,a) \leq_n F(x,y) \leq_n F(a,y) = a,
\]
which shows that $F(x,y)=a$.

(Sufficiency) Obvious.

(\ref{it:mpp01bis})  Let $x,y\in [1,a-1]$ with $x<y$. Since $F$ is $\leq_n$-preserving and idempotent, we obtain
 $F(x,y)\leq F(y,y)=y<a$. The case $x,y\in [a+1,n]$ can be dealt with similarly.

(\ref{it:mpp02}) (Necessity) Let $a\in X_n$ be a zero element of $F$. Let us prove that
\begin{equation}\label{eqn:hty}
\{(x,y) \mid F(x,y)=a\}~=~\big([1,a]\times [a,n] \big) \cup \big([a,n]\times [1,a]\big)
\end{equation}
The ``$\supseteq$'' inclusion follows by (\ref{it:mpp01}). The reverse inclusion is the contrapositive of (\ref{it:mpp01bis}).
We derive \eqref{eqn:deg}  by a simple counting argument.

(Sufficiency) Let $F\colon X_n^2 \to X_n$ be a $\leq_n$-preserving idempotent operation, and let $a\in X_n$ be such that \eqref{eqn:deg} holds. By (\ref{it:mpp01bis}), we know that ``$\subseteq$'' inclusion of \eqref{eqn:hty} holds. From \eqref{eqn:deg}, we obtain that both sides of \eqref{eqn:hty} have the same (finite) cardinality,  which proves identity \eqref{eqn:hty}.
\end{proof}

We obtain the following corollary as an immediate consequence of Lemma \ref{lem:dse}.
\begin{cor}
Let $F\colon X_n^2 \to X_n$ be an idempotent and nondecreasing operation and let $a \in X_n$.
\begin{enumerate}
\item If $a\in \{1,n\}$, then $a$ is a zero element of $F$ if and only if $\deg_F(a)=2n-1$.
\item If $a$ is a zero element of $F$, then $a\in \{1,n\}$ if and only if $\deg_F(a)=2n-1$.
\end{enumerate}
\end{cor}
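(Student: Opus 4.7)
The plan is to read this corollary as a direct specialization of Lemma \ref{lem:dse}(\ref{it:mpp02}), which characterizes zero elements of an idempotent $\leq_n$-preserving operation $F$ by the identity $\deg_F(a)=2a(n-a+1)-1$. Once that formula is in hand, both parts reduce to the algebraic question: for which values of $a\in\{1,\dots,n\}$ does $2a(n-a+1)-1$ equal $2n-1$? So the whole argument is essentially bookkeeping around a single quadratic identity.

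For part (1), I would simply evaluate $2a(n-a+1)-1$ at the two endpoints. When $a=1$ one gets $2\cdot 1\cdot n-1=2n-1$, and when $a=n$ one gets $2n\cdot 1-1=2n-1$. Thus in both cases Lemma \ref{lem:dse}(\ref{it:mpp02}) directly asserts that $a$ is a zero element of $F$ if and only if $\deg_F(a)=2n-1$.

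For part (2), suppose $a$ is a zero element of $F$. By Lemma \ref{lem:dse}(\ref{it:mpp02}), $\deg_F(a)=2a(n-a+1)-1$, so $\deg_F(a)=2n-1$ if and only if $a(n-a+1)=n$, which I would rewrite as
\[
a^2-(n+1)a+n = 0,
\]
that is $(a-1)(a-n)=0$. Hence $\deg_F(a)=2n-1$ forces $a\in\{1,n\}$. The reverse implication, that $a\in\{1,n\}$ implies $\deg_F(a)=2n-1$, is exactly the computation from part (1).

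There is no genuine obstacle here: the corollary is a clean specialization of Lemma \ref{lem:dse}, and the only non-trivial step, if one can call it that, is recognizing the factorization $(a-1)(a-n)$. I would not rely on any hidden use of associativity or symmetry, since Lemma \ref{lem:dse} is stated purely for idempotent $\leq_n$-preserving operations, matching the hypothesis that $F$ is idempotent and nondecreasing.
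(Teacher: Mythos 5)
Your proof is correct and follows exactly the route the paper intends: the paper derives this corollary as an immediate consequence of Lemma \ref{lem:dse}(\ref{it:mpp02}), and your evaluation of $2a(n-a+1)-1$ at $a\in\{1,n\}$ together with the factorization $(a-1)(a-n)=0$ is just the algebra spelled out. Nothing is missing.
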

\begin{rem}
In \cite[Proposition 4]{Couceiro2018} it was shown that an element $e \in X_n$ is a neutral element of a  quasitrivial operation $F\colon X_n^2 \to X_n$ if and only if $\deg_{F}(e)=1$. We observe that if  we relax quastriviality into idempotency, then the sufficiency part of the previous equivalence does not hold. Indeed, considering the idempotent, symmetric, and $\leq_3$-preserving operation $F\colon X_3^2 \to X_3$ whose contour plot is depicted in Figure \ref{fig:3}, we see that $\deg_F(1)=\deg_{F}(3)=1$ but neither $1$ nor $3$ is a neutral element of $F$.
\end{rem}

\setlength{\unitlength}{5.0ex}
\begin{figure}[htbp]
\begin{center}
\begin{small}
\begin{picture}(3,3)
\multiput(0.5,0.5)(0,1){3}{\multiput(0,0)(1,0){3}{\circle*{0.2}}}
\drawline[1](0.5,2.5)(0.5,1.5)\drawline[1](0.5,1.5)(1.5,1.5)\drawline[1](1.5,1.5)(1.5,2.5)\drawline[1](1.5,2.5)(0.5,2.5)\drawline[1](1.5,1.5)(2.5,1.5)\drawline[1](2.5,1.5)(2.5,0.5)\drawline[1](2.5,0.5)(1.5,0.5)\drawline[1](1.5,0.5)(1.5,1.5)
\put(0.75,0.75){\makebox(0,0){$1$}}\put(1.75,1.75){\makebox(0,0){$2$}}\put(2.75,2.75){\makebox(0,0){$3$}}
\end{picture}
\end{small}
\caption{An idempotent, symmetric, and $\leq_3$-preserving operation on $X_3$}
\label{fig:3}
\end{center}
\end{figure}
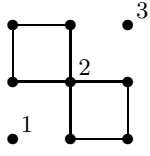

\subsection{Enumeration of finite $\leq_n$-preserving semilattice operations}\label{sec:enum}
It is shown in \cite[Theorem 3.3]{Couceiro2017}  that if  $F\colon X_n^2 \to X_n$ is an associative, symmetric, and quasitrivial operation, then $\preceq_F$ can be recovered from the degree sequence $(\deg_F(x))_{x\in X_n}$ of $F$ using the equivalence
\begin{equation}\label{eqn:eq}
x \preceq_F y \quad \iff \quad \deg_{F}(x)\leq \deg_F(y).
\end{equation}
Clearly, \eqref{eqn:eq} does not hold if $F$ is a $\leq_n$-preserving semilattice operation that is not quasitrivial, since \eqref{eqn:eq} implies that $\preceq_F$ is a total order. In this subsection, we provide a proper way to easily recover the order $\preceq_F$ associated with a $\leq_n$-preserving semilattice operation $F\colon X_n^2 \to X_n$ given by its contour plot. Moreover, we prove that the number of $\leq_n$-preserving semilattice operations  is  the $n^{\text{th}}$ Catalan number.

%
%
%

 Corollary \ref{cor:int}  and Theorem \ref{thm:main} enable us to construct the semilattice order $\preceq_F$ associated with a $\leq_n$-preserving semilattice operation $F\colon X^2 \to X_n$ with top element $r$ in a  recursive way, as follows:
\begin{enumerate}
\item\label{it:daz01} if $r\in \{1,n\}$ then $F'=F|_{(X_n\setminus\{r\})^2}$ is a $\leq_n$-preserving semilattice operation, and $\preceq_F$ is obtained by adding $r$ as the top element in $\preceq_{F'}$,

\item\label{it:daz02} if $r\not\in\{1,n\}$, then $F_1=F|_{[1, r-1]^2}$ and $F_2=F|_{[r+1, n]^2}$ are $\leq_n$-preserving semilattice operations, and $\preceq_F$ is obtained by adding $r$ as the top element of $\preceq_{F_1} \cup \preceq_{F_2}$.
\end{enumerate}

The preceding procedure can also be used  to test associativity for a $\leq_n$-preserving, symmetric, and idempotent operation $F\colon X_n^2 \to X_n$. If $F$ is not associative, then at some step $i$ of the procedure,  the function $F|_{X'^2}$, will satisfy one of the following conditions, where $X'$ is the set of elements that have not yet been added to $\preceq_F$ at step $i$:
\begin{enumerate}[(a)]
\item\label{it:rty} $F|_{X'^2}$ has no zero element,
\item $F|_{X'^2}$ has a  zero element and its contour plot $G$ has one or two  convex connected components $C_1$ and $C_2$ (see Lemma \ref{lem:dse})  but  there is $C_i\neq \varnothing$ such that $F|_{C_i}$ is not associative.
\end{enumerate}

Let $F\colon X_n^2 \to X_n$ be an idempotent, symmetric, and $\leq_n$-preserving operation. To test if $F$ is associative, apply the recursive procedure described by  (\ref{it:daz01}) and (\ref{it:daz02}). If at every step of the procedure such that $X'\neq \varnothing$  condition (\ref{it:rty}) is not satisfied,  then $F$ is associative and the procedure outputs $\preceq_F$. Otherwise, the procedure stops at a some step where (\ref{it:rty}) is satisfied, and $F$ is not associative.

For every $n\geq 0$, let $ \alpha(n)$ be the number of join-semilattice orders  on $X_n$ that are nondecreasing for $\leq$. By definition, we have $ \alpha(0)=1$.  The following Proposition proves that $ \alpha(n)$ is the $n^{th}$ Catalan number (see, \emph{e.g.}, \cite{Stanley1999}). We denote by $\N=\{0,1,2\ldots\}$ the set of nonnegative integers.

\begin{prop}\label{prop:cat}
The sequence $ \alpha\colon \N \to \N$ satisfies the recurrence relation
\begin{equation}\label{eqn:cat}
 \alpha(n)=\sum_{i=1}^n  \alpha(n-i) \alpha(i-1), \qquad n\geq 1.
\end{equation}
It follows that $ \alpha(n)$ is the $n^{th}$ Catalan number $\frac{(2n)!}{n!(n+1)!}$ for every $n\in \N$.
\end{prop}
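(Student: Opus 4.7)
The plan is to use the recursive construction of nondecreasing semilattice orders described just before the proposition. Fix $n\geq 1$ and partition the set of nondecreasing semilattice orders on $X_n$ according to the value $r\in X_n$ of their top element. For each such $r$, I will set up a bijection between the orders with top $r$ and a Cartesian product of sets counted by $\alpha$, then sum the resulting counts.

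First I would handle the two ``extremal'' cases $r=1$ and $r=n$. By Corollary~\ref{cor:main2}, these are precisely the orders in which the top has a single child, so deletion of $r$ yields a nondecreasing semilattice order on the chain $X_n\setminus\{r\}$ of $n-1$ elements. Conversely, adding $r$ back as a new top of any such order produces a nondecreasing semilattice order on $X_n$: the CI-property and internality for $\leq_n$ are immediate since the new principal ideal $(r]_\preceq$ equals all of $X_n$, which is trivially convex, and no new violation of internality can be created. Since the number of nondecreasing semilattice orders only depends on the cardinality of the chain, each of these two cases contributes $\alpha(n-1)$ orders.

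Next I would handle the ``interior'' case $1<r<n$. Here Corollary~\ref{cor:int} is crucial: the top $r$ has two children, and the two principal ideals generated by them are exactly $\{1,\dots,r-1\}$ and $\{r+1,\dots,n\}$. Restricting $\preceq$ to each of these sets yields a nondecreasing semilattice order on the corresponding subchain (by Theorem~\ref{thm:main}, since all the defining properties are inherited by restriction to intervals). Conversely, given any pair of nondecreasing semilattice orders on $\{1,\dots,r-1\}$ and $\{r+1,\dots,n\}$, gluing them by adjoining $r$ as a common top above both roots produces a nondecreasing semilattice order on $X_n$; to verify this I would check the CI-property and internality directly, noting that any triple $a\leq b\leq c$ either lies entirely in one of the two halves (where the property holds by induction) or straddles $r$, in which case $a\curlyvee c=r\succeq b$ holds trivially. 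Hence the contribution of each interior $r$ is $\alpha(r-1)\,\alpha(n-r)$.

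Summing the two contributions yields
\[
\alpha(n) \;=\; 2\alpha(n-1) \;+\; \sum_{r=2}^{n-1} \alpha(r-1)\alpha(n-r).
\]
Since $\alpha(0)=1$, the two extremal terms $\alpha(n-1)$ can be absorbed as $\alpha(0)\alpha(n-1)$ and $\alpha(n-1)\alpha(0)$, giving the claimed recurrence $\alpha(n)=\sum_{i=1}^{n}\alpha(i-1)\alpha(n-i)$. This is precisely the defining recurrence of the Catalan numbers together with the initial condition $\alpha(0)=1$, so $\alpha(n)=\tfrac{(2n)!}{n!(n+1)!}$ for all $n\in\N$. The main subtlety is not the arithmetic but the verification that the gluing construction genuinely produces a nondecreasing semilattice order; once Theorem~\ref{thm:main} and Corollaries~\ref{cor:main2} and~\ref{cor:int} are invoked, the bijection is essentially bookkeeping.
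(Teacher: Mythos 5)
Your proposal is correct and follows essentially the same route as the paper's proof: partition the nondecreasing semilattice orders on $X_n$ by the top element $r$, use Corollary~\ref{cor:main2} for $r\in\{1,n\}$ and Corollary~\ref{cor:int} (with Theorem~\ref{thm:main}) for $1<r<n$, and recognize the resulting recurrence as the Catalan recurrence with $\alpha(0)=1$. Your explicit verification that the gluing construction again yields a nondecreasing semilattice order (the bijectivity of the decomposition) is a detail the paper leaves implicit, so it is a welcome addition rather than a deviation.
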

\begin{proof}
Let $\preceq$ be a semilattice order on $X_n$ that is nondecreasing for $\leq_n$. By Theorem \ref{thm:main}, we know that $(X_n, \preceq)$ is a binary semilattice. Let $r$ be the top element of its Hasse diagram, and  set $X'=X_n\setminus\{r\}$. By Corollary \ref{cor:main2}, if $r\in \{1,n\}$, then $\preceq_{X'}$ is one of the $u(n-1)$ semilattice orders that are nondecreasing for $\leq_{X'}$. By Corollaries \ref{cor:main2} and \ref{cor:int}, if $r\not\in\{1,n\}$, then $\preceq_{X'}$ is the  union of one of the $ \alpha(r-1)$ semilattices orders on $[1,r-1]$ that is nondecreasing for  $\leq_{[1,r-1]}$ with one of the $ \alpha(n-r)$ semilattice orders on $[r+1,  n]$ that is nondecreasing for $\leq_{[r+1,  n]}$.
\end{proof}
Proposition \ref{prop:cat} counts the number of semilattice orders on $X_n$ that have the CI-property and are internal for $\leq_n$. The Hasse diagram of these semilattices are binary trees verifying condition (\ref{it:nse02}) of Theorem \ref{thm:main}.  In Appendix \ref{sec:ap}, we consider the problem of counting the semilattice orders whose Hasse diagram is a binary tree, and either are internal or have the CI-property for $\leq_n$.

\subsection{Construction of the total orders for that a  semilattice order is nondecreasing}\label{sec:const}

Let $\preceq$ be a semilattice order with top element $r$. The family of total orders $\leq$ on $X_n$ for which $\preceq$ is nondecreasing can be constructed by {recursion}, using the following result.

\begin{prop}\label{prop:hty}
Let $(X_n,\preceq)$ be a binary semilattice  with top element $r$ and $G$ be its Hasse diagram. Let $C_1$ and $C_2$ be the connected components of $G-r$,  with the convention that $C_2=\varnothing$ if $r$ has only one child. The following conditions are equivalent.
\begin{enumerate}[(i)]
\item\label{it:loi01} The order $\preceq$ is nondecreasing for $\leq_n$.
\item\label{it:loi02} There exist total orders $\leq_1$ on $C_1$ and $\leq_2$ on $C_2$ such that
\begin{enumerate}[(a)]
\item the order $\preceq_{C_i}$ is nondecreasing for $\leq_i$ for every $1 \leq i\leq 2$,
\item the total order $\leq$ is obtained  by adding $r$ as the top of $\leq_{1}$ and the bottom of $\leq_{2}$, or conversely.
\end{enumerate}
\end{enumerate}
\end{prop}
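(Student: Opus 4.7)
The plan is to handle each implication separately, exploiting the tree structure furnished by Theorem \ref{thm:main} together with the structural information about the root $r$ provided by Corollaries \ref{cor:main2} and \ref{cor:int}.

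For (\ref{it:loi01}) $\implies$ (\ref{it:loi02}), I would split according to whether $r$ has one or two children in $G$. If $r$ has a single child, Corollary \ref{cor:main2} forces $r\in\{1,n\}$, so taking $\leq_1$ to be the restriction of $\leq_n$ to $C_1=X_n\setminus\{r\}$ (and $\leq_2=\varnothing$) makes condition~(b) automatic. If $r$ has two children $x_1,x_2$, then Corollary \ref{cor:int} gives, after possibly swapping $x_1$ and $x_2$, that $C_1=(x_1]_\preceq=\{1,\ldots,r-1\}$ and $C_2=(x_2]_\preceq=\{r+1,\ldots,n\}$, so the restrictions $\leq_i=\leq_n|_{C_i}$ again witness~(b). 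To establish~(a), I would observe that each $C_i$ is an ideal of $(X_n,\preceq)$, hence closed under $\curlyvee$; consequently the CI-property and internality of $\preceq$ for $\leq_n$ restrict verbatim to $\preceq_{C_i}$ for $\leq_i$.

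For (\ref{it:loi02}) $\implies$ (\ref{it:loi01}), I would verify the CI-property and internality directly. Up to duality, assume $\leq$ is obtained by listing $C_1$ in order $\leq_1$, then $r$, then $C_2$ in order $\leq_2$, so that $C_1$ and $C_2$ are convex in $(X_n,\leq)$ and separated by $r$. The crucial observation is that for $x\in C_1$ and $y\in C_2$ one has $x\curlyvee y=r$: indeed, $x$ and $y$ belong to distinct connected components of $G-r$, so their meet-tree path in $\preceq$ crosses $r$. Given $a\leq b\leq c$, I would split cases according to how $\{a,b,c\}$ is distributed among $C_1$, $\{r\}$, $C_2$. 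When the triple lies entirely in a single $C_i$, the inequality $b\preceq a\curlyvee c$ is the CI-property of $\preceq_{C_i}$ for $\leq_i$; in every remaining case $a\curlyvee c$ is either $r$ itself or dominates $r$, so $b\preceq r\preceq a\curlyvee c$ holds trivially. Internality is handled by the same partition: within a single component it reduces to internality of $\preceq_{C_i}$, and across components the joins that would have to be avoided all equal $r$, which lies strictly between $C_1$ and $C_2$ in $\leq$.

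The forward direction is essentially bookkeeping built on the previously established corollaries, and the reverse direction reduces to a short case analysis once one notices that joins between $C_1$ and $C_2$ collapse to $r$. The main (though mild) obstacle is keeping this case analysis organized, rather than any genuine difficulty.
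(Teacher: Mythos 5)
Your argument is correct and takes essentially the same route as the paper: the forward direction rests on Corollary \ref{cor:main2}/Lemma \ref{lem:bez} and Corollary \ref{cor:int} to locate $r$ and identify $C_1$, $C_2$ with $\{1,\dots,r-1\}$ and $\{r+1,\dots,n\}$, and the converse uses the same decomposition that the paper handles by citing Lemmas \ref{lem:ic} and \ref{lem:int}. The only cosmetic differences are that you replace the paper's induction on $n$ by the direct observation that the CI-property and internality restrict to the ideals $C_i$ (which are closed under $\curlyvee$), and that you verify the converse by hand via the key fact $x\curlyvee y=r$ for $x\in C_1$, $y\in C_2$; both are sound.
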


\begin{proof}
(\ref{it:loi02}) $\implies$ (\ref{it:loi01}) Since $\preceq_{C_1}$ and $\preceq_{C_2}$ have the CI-poperty for $\leq_1$ and $\leq_2$, respectively, it follows by Lemma  \ref{lem:ic} that $\preceq$ has the CI-property with respect to $\leq$. Similarly, we obtain by Lemma \ref{lem:int} (\ref{it:mlo02}) that $\preceq$ is internal for $\leq$.

(\ref{it:loi01}) $\implies$ (\ref{it:loi02})
The proof is obtained by an easy induction on $n$, using Lemmas \ref{lem:bez} and \ref{cor:int} in the induction step.
\end{proof}

The following corollary is obtained from Proposition \ref{prop:hty} by an easy induction on $n$.

\begin{cor}\label{cor:rev}
Let $(X_n,\preceq)$ be a binary semilattice, and let $L$ be the number of minimal elements in $(X_n,\preceq)$. The number of total orders for which  $\preceq$ is nondecreasing is equal to $2^{n-L}$.
\end{cor}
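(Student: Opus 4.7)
My plan is to prove Corollary \ref{cor:rev} by induction on $n$, using Proposition \ref{prop:hty} to reduce counting valid total orders on $X_n$ to counting valid total orders on the components of $G - r$, where $r$ is the top element of the Hasse diagram $G$ of $(X_n,\preceq)$.

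The base case $n=1$ is immediate: the unique element is the unique minimal element, so $L=1$, and there is exactly $1 = 2^{0}$ total order on a one-element set. For the inductive step, let $(X_n,\preceq)$ be a binary semilattice with $n \geq 2$, top element $r$, and Hasse diagram $G$. Let $C_1$ and $C_2$ be the connected components of $G-r$ (with $C_2=\varnothing$ if $r$ has only one child), set $n_i=|C_i|$, and let $L_i$ be the number of minimal elements of $(C_i,\preceq_{C_i})$. Since $n\geq 2$, the top element $r$ is not minimal, so the minimal elements of $(X_n,\preceq)$ are exactly those of the $C_i$'s; hence $L=L_1+L_2$ and $n_1+n_2=n-1$.

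If $r$ has only one child, then $C_2=\varnothing$, so $n_1=n-1$ and $L_1=L$. By Proposition \ref{prop:hty}, every total order $\leq$ on $X_n$ for which $\preceq$ is nondecreasing is obtained from a valid total order $\leq_1$ on $C_1$ by inserting $r$ either as the new top or as the new bottom, and conversely each such insertion yields a valid total order on $X_n$. The induction hypothesis gives $2^{(n-1)-L}$ valid orders on $C_1$, so the total count is $2 \cdot 2^{(n-1)-L} = 2^{n-L}$. If instead $r$ has two children, then both $C_i$ are nonempty, and Proposition \ref{prop:hty} shows that every valid $\leq$ is obtained by independently choosing a valid $\leq_i$ on each $C_i$ and then inserting $r$ either between $\leq_1$ (below) and $\leq_2$ (above) or between $\leq_2$ (below) and $\leq_1$ (above). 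By induction, the number of valid $\leq_i$ is $2^{n_i-L_i}$, and multiplying by the $2$ choices of placement of $r$ yields
\[
2 \cdot 2^{n_1-L_1} \cdot 2^{n_2-L_2} \;=\; 2^{\,1+(n_1+n_2)-(L_1+L_2)} \;=\; 2^{n-L},
\]
completing the induction.

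I do not anticipate a real obstacle: the only thing to be careful about is checking that the two insertion choices in Proposition \ref{prop:hty} always produce \emph{distinct} total orders (which is immediate, since the elements below $r$ form disjoint sets in the two choices), and that the bijection with pairs $(\leq_1,\leq_2)$ together with a placement choice is exact, but both points are exactly what Proposition \ref{prop:hty} guarantees. The bookkeeping of $L = L_1 + L_2$ and $n_1+n_2 = n-1$ then makes the arithmetic come out cleanly.
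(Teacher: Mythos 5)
Your proof is correct and takes essentially the same route as the paper, which obtains Corollary \ref{cor:rev} precisely by an easy induction on $n$ from Proposition \ref{prop:hty}. Your bookkeeping ($L=L_1+L_2$, $n_1+n_2=n-1$, and the factor $2$ for the two placements of the top element $r$) is exactly the intended content of that induction.
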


\begin{ex}
The eight total orders on $X=\{a,b,c,d,r\}$ for which the semilattice order $\preceq$ depicted in Fig.~\ref{fig:bin} is nondecreasing are
\begin{gather*}
r<b<a<c<d, \qquad r<b<a<d<c,\\
r<c<d<a<b, \qquad r<d<c<a<b,
\end{gather*}
and their dual orders.
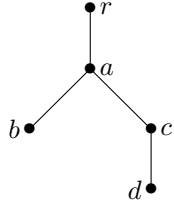
\begin{figure}
\begin{tikzpicture}[scale=\tkzscl]
\draw (0,1) node{$\bullet$} node[right]{$r$} -- (0,0);
\draw (0,0) node{$\bullet$} node[right]{$a$} -- (-1,-1) node{$\bullet$} node[left]{$b$};
\draw (0,0) -- (1,-1) node{$\bullet$}node[right]{$c$} -- (1,-2)  node{$\bullet$} node[left]{$d$};

\end{tikzpicture}
\caption{semilattice $(X, \preceq)$ whose Hasse diagram} is a binary tree\label{fig:bin}
\end{figure}
\end{ex}

It follows from Corollary \ref{cor:type} that the number  $\tau(n)$ of isomorphism types of  semilattices that are $\leq_n$-preserving is equal to the number $A001190(n+1)$  of unordered and unlabeled rooted binary trees (see \cite{OEIS}), where a tree is said to be \emph{unordered} if no order is specified on the children of a parent vertex). A similar counting argument  as in Proposition \ref{prop:cat}  proves the following recurrence relations for $\tau$.
\begin{cor}\label{cor:tnum}
The number $\tau(n)$ of isomorphism types of semilattices that are nondecreasing for $\leq_n$ satisfies $\tau(0)=1$, $\tau(1)=1$ and
\begin{gather*}
\tau(2n)=\sum_{i=0}^{n-1} \tau(i)\tau(2n-1-i)\\
\tau(2n+1)=\sum_{i=0}^{n-1}\tau(i)\tau(2n-i)+\frac{\tau(n)}{2}\left(\tau(n)+1\right)
\end{gather*}
for all $n\geq 1$.
\end{cor}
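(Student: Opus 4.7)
The plan is to invoke Corollary \ref{cor:type}, which identifies isomorphism types of semilattices that are nondecreasing for $\leq_n$ with the unordered rooted binary trees on $n$ vertices, and then perform a root-splitting recursion. The base cases $\tau(0)=1$ (the empty tree) and $\tau(1)=1$ (a single vertex) are immediate from the definition.

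For the inductive step, I would take a binary tree $T$ on $n\geq 2$ vertices with root $r$, and consider the at-most-two connected components of $T-r$. Each component is itself an unordered rooted binary tree whose root is the corresponding child of $r$, and they come as an \emph{unordered} pair because we are counting isomorphism types. Using the convention $\tau(0)=1$ to encode ``no child on that side,'' I parameterize the isomorphism type of $T$ by an unordered pair $\{a,b\}$ of nonnegative integers with $a+b=n-1$, together with a choice of isomorphism type for a subtree of size $a$ and one of size $b$. For $a\neq b$ this contributes $\tau(a)\tau(b)$ possibilities, while for $a=b$ it contributes the number of size-two multisets from a $\tau(a)$-element set, namely $\binom{\tau(a)+1}{2}=\tau(a)(\tau(a)+1)/2$.

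Splitting on the parity of $n$ then gives the two formulas. When $n=2m$, the sum $a+b=2m-1$ is odd, so the diagonal case $a=b$ never occurs and the unordered pairs are exactly $\{i,2m-1-i\}$ for $0\leq i\leq m-1$, producing $\tau(2m)=\sum_{i=0}^{m-1}\tau(i)\tau(2m-1-i)$. When $n=2m+1$, the sum $a+b=2m$ is even and the pair $\{m,m\}$ does occur, contributing the extra term $\tau(m)(\tau(m)+1)/2$ on top of the cross terms $\sum_{i=0}^{m-1}\tau(i)\tau(2m-i)$, yielding the second identity.

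The only delicate point is the ``unordered'' bookkeeping: one must be careful not to double count when the two subtrees have the same size but might be non-isomorphic, which is precisely why the diagonal term is $\binom{\tau(m)+1}{2}$ rather than $\tau(m)^2$ or $\binom{\tau(m)}{2}$. Apart from that, the argument is a direct transcription of the standard generating-function recursion for unordered rooted binary trees, exactly parallel to the proof of Proposition \ref{prop:cat}.
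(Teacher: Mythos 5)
Your argument is correct and is essentially the paper's own: the paper proves Corollary \ref{cor:tnum} by the same root-splitting count of unordered rooted binary trees (via Corollary \ref{cor:type}), ``as in Proposition \ref{prop:cat}'', with the only extra care being exactly the multiset bookkeeping $\binom{\tau(m)+1}{2}$ for the equal-size case that you handle explicitly. Nothing further is needed.
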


\section{$k$-ary symmetric, idempotent and $\leq_n$-preserving semigroups}\label{sec:kary}
Let $k\geq 2$ be an integer. Recall that an operation $F\colon X^k \to X$ is \emph{idempotent} if it satisfies the equation $F(x,\ldots, x)=x$. It is \emph{symmetric} if it satisfies the equations $F(x_1, \ldots, x_k)=F(x_{\sigma1},  \ldots, x_ {\sigma k})$ for every bijection $\sigma$ of $\{1,  \ldots, k\}$. If $X$ is equipped with a total order $\leq$, then $F$ is said to be \emph{$\leq$-preserving} if $F(x_1, \ldots, x_k)\leq F(x'_1, \ldots, x'_k)$ for every $x_1, x'_1, \ldots, x_k,x'_k$ such that $x_i\leq x'_i$ for every $i \leq k$.

In this  section, we characterize  associative, idempotent, symmetric, and $\leq$-preserving operations $F\colon X^k \to X$ on a chain $(X,\leq)$, where associativity for $k$-ary operations is defined as follows.

\begin{defn}
An operation $F\colon X^k\to X$ is said to be
 \emph{associative} if it satisfies the equation
\begin{eqnarray*}
\lefteqn{F(x_1,\ldots,x_{i-1},F(x_i,\ldots,x_{i+k-1}),x_{i+k},\ldots,x_{2k-1})}\\
&=& F(x_1,\ldots,x_i,F(x_{i+1},\ldots,x_{i+k}),x_{i+k+1},\ldots,x_{2k-1})
\end{eqnarray*}
for every $i\leq k$.

\end{defn}
If $F\colon X^k \to X$ is an associative operation, then $(X,F)$ is called a \emph{$k$-ary semigroup} \cite{Dor28,Pos40}. Examples of $k$-ary associative operations are given by compositions of binary associative ones. More precisely, if $H\colon X^2 \to X$ is  associative, and if $(H_k)_{k\geq 2}$ is the sequence of the $k$-ary operations $H_k\colon X^k \to X$ defined inductively by the rules
\begin{gather}
H_2(x,y)=H(x,y),\nonumber\\
 H_{k+1}(x_1, \ldots, x_{k+1})=H(H_{k}(x_1, \ldots, x_{k}), x_{k+1}), \qquad k\geq 3,\label{eqn:dec}
\end{gather}
then $H_k$ is a $k$-ary associative operation for every odd $k\geq 2$.

\begin{defn}[{{\cite{Ackerman,DudMuk06}}}]
An associative operation $F\colon X^k \to X$ that can be obtained from an associative operation $H\colon X^2 \to X$ as in \eqref{eqn:dec} is said to be \emph{reducible to} (or to \emph{be derived from}) $H$.
\end{defn}

Examples of $k$-ary associative operations $F\colon \R^k \to \R$ that are not reducible to a binary associative one are given by the alternating sums $(x_1, \ldots, x_k) \mapsto \sum_{i=1}^k (-1)^i x_k$, for every odd $k \geq 3$.

\begin{thm}[{\cite[Theorem 4.3]{Kiss}}]\label{prop:red}
Let $(X,\leq)$ be a totally ordered set, and let $F\colon X^k \to X$ be an associative, idempotent, symmetric, and $\leq$-preserving operation. Then $F$ is reducible to a unique binary operation $G\colon X^2 \to X$ that is defined by $G(x,y)=F(x, \ldots,  x,y)$.
\end{thm}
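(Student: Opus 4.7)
The plan is to take $G(x,y):=F(x,\ldots,x,y)$ (with $k-1$ copies of $x$) and show: (1) $G$ is a $\leq$-preserving semilattice operation and (2) $F=G_k$ in the sense of \eqref{eqn:dec}. Uniqueness of $G$ is almost automatic: if $F=H_k$ for some associative binary $H$, then $F(x,\ldots,x,y)=H_k(x,\ldots,x,y)=H(H_{k-1}(x,\ldots,x),y)$, and a short iterated-idempotency argument using the associativity of $H$ together with the idempotency of $F$ forces $H_{k-1}(x,\ldots,x)=x$, whence $H(x,y)=G(x,y)$.

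Among the properties needed for (1), idempotency $(G(x,x)=F(x,\ldots,x)=x)$ and $\leq$-preservingness are trivially inherited from $F$. The heart of the proof is the symmetry of $G$. Fix $x\leq y$. Replacing each $x$ in positions $2,\ldots,k-1$ by $y$ and using $\leq$-preservingness of $F$ gives $G(x,y)\leq G(y,x)$ immediately. For the reverse, I would introduce the well-defined symmetric $(2k-1)$-ary iterate $F^{(2)}(a_1,\ldots,a_{2k-1}):=F(F(a_1,\ldots,a_k),a_{k+1},\ldots,a_{2k-1})$, which by the $k$-ary associativity of $F$ does not depend on where the inner $F$ is placed. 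Evaluating $F^{(2)}$ on the multisets $\{x^{k-1},y^k\}$ and $\{x^k,y^{k-1}\}$ via two different placements of the inner $F$ (one on a constant block, which collapses via idempotency; another on a mixed block, which yields an outer $F$ involving $G(x,y)$ or $G(y,x)$) produces the identities $G(y,G(x,y))=G(x,y)$ and $G(x,G(y,x))=G(y,x)$. A similar but longer computation of a higher $F$-iterate on a carefully chosen mixture then shows that the nondecreasing function $g\colon [x,y]\to X$ defined by $g(t):=F(x,t,\ldots,t,y)$ (with $k-2$ copies of $t$) satisfies $g(x)=g(G(y,x))=G(x,y)$ and $g(G(x,y))=g(y)=G(y,x)$. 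Supposing $G(x,y)<G(y,x)$ would then give $g(G(y,x))<g(G(x,y))$ while $G(x,y)<G(y,x)$, contradicting the monotonicity of $g$; hence $G(x,y)=G(y,x)$.

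Associativity of $G$ is then obtained by expanding $G(G(x,y),z)$ and $G(x,G(y,z))$ as nested $F$-expressions and reducing both, via $k$-ary associativity (which reshuffles inner $F$'s) and idempotency (which collapses constant blocks $F(a,\ldots,a)=a$), to the same value $F^{(2)}(x^{k-1},y^{k-1},z)$. To establish (2), I would apply Theorem~\ref{cor:main} to view $G$ as the join $\curlyvee$ of a semilattice order nondecreasing for $\leq$, so that $G_k(x_1,\ldots,x_k)=x_1\curlyvee\cdots\curlyvee x_k$, and prove $F(x_1,\ldots,x_k)=x_1\curlyvee\cdots\curlyvee x_k$ by induction on the number of distinct values in $(x_1,\ldots,x_k)$: the base case is idempotency of $F$, and in the inductive step I would isolate the $\preceq$-largest value $v$ among $(x_1,\ldots,x_k)$ via symmetry of $F$ and use the identity $G(v,w)=F(v,\ldots,v,w)=v\curlyvee w$ to reduce to one fewer distinct value.

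The principal obstacle is the proof of symmetry of $G$: this is the one point in the argument where the interplay between $\leq$-preservingness of $F$ and its $k$-ary associativity must be genuinely exploited in order to exclude the asymmetric possibility $G(x,y)<G(y,x)$. Once symmetry is secured, the associativity of $G$ and the reducibility $F=G_k$ follow from comparatively routine manipulations of the $k$-ary associativity and idempotency of $F$.
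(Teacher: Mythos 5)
First, a point of comparison: the paper does not prove Theorem~\ref{prop:red} at all --- it is imported verbatim from Kiss and Somlai \cite[Theorem 4.3]{Kiss} --- so there is no internal proof to measure your argument against, and your proposal has to stand on its own. Its existence half is essentially sound: monotonicity gives $G(x,y)\leq G(y,x)$ for $x\leq y$, and your identities $g(G(y,x))=G(x,y)$ and $g(G(x,y))=G(y,x)$ do follow by writing each side as a composition of copies of $F$ over the same multiset of leaves and collapsing constant blocks via idempotency, after which monotonicity of $g$ excludes $G(x,y)<G(y,x)$; the associativity of $G$ and the reduction $F=G_k$ go through in the same spirit. Two caveats: all of these manipulations rest on the generalized associativity law for $k$-ary semigroups (every composition tree over the same word has the same value), which you use silently and would have to state or prove; and in your final induction the ``$\preceq$-largest value among $(x_1,\ldots,x_k)$'' need not exist, since $\preceq$ is only a semilattice order --- replace that step by iterating the identity $F(x_1,x_2,\ldots,x_k)=F(x_1\curlyvee x_j,x_2,\ldots,x_k)$ (insert $F(x_j,\ldots,x_j)$ by idempotency and regroup), which turns the first argument into $x_1\curlyvee\cdots\curlyvee x_k$ and then absorbs the remaining arguments one by one.

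The genuine gap is uniqueness. If $F=H_k$, idempotency of $F$ only yields $x^{m(k-1)+1}=x$ in the semigroup $(X,H)$, and no ``iterated-idempotency'' argument using associativity of $H$ alone can give $H_{k-1}(x,\ldots,x)=x$: for $H$ the addition of $\mathbb{Z}_{k-1}$ one has $x^{m(k-1)+1}=x$ for all $m$ while $x^{k-1}=0$; for $k=3$ this $H$ even reduces the idempotent, symmetric ternary operation $x+y+z$ on $\mathbb{Z}_2$, which merely fails to be $\leq$-preserving. So your uniqueness step must genuinely use the order-theoretic hypotheses, not just associativity of $H$ and $F(x,\ldots,x)=x$. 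A repair consistent with your setup: once $F=G_k$ with $G={\curlyvee}$ is established, every $H$-product whose length is $\equiv 1\pmod{k-1}$ can be bracketed into iterated $F$'s and hence equals the join of its factors; reading the length-$(2k-1)$ product $xy\,xy\cdots xy\,z$ once as a word in $x,y,z$ and once as $H_k(xy,\ldots,xy,z)=F(xy,\ldots,xy,z)=(xy)\curlyvee z$ gives $(xy)\curlyvee z=x\curlyvee y\curlyvee z$ for every $z$, and the choices $z=xy$ and $z=x\curlyvee y$ force $xy=x\curlyvee y$, i.e.\ $H=G$. With that replacement (and the minor fixes above) your proposal becomes a complete proof.
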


By combining Theorem \ref{prop:red} and Theorem \ref{cor:main} we obtain the following characterization.

\begin{thm}\label{thm:kary}
Let $(X,\leq)$ be totally ordered set. An operation $F\colon X^k \to X$ is associative, idempotent, symmetric, and $\leq$-preserving if and only if $F$ is reducible to the  join operation of a semilattice order that is nondecreasing for $\leq$.
\end{thm}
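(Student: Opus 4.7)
The plan is to combine Theorem~\ref{prop:red} (Kiss's reducibility result) with Theorem~\ref{cor:main} (our binary characterization): the former reduces any $k$-ary associative, idempotent, symmetric, $\leq$-preserving $F$ to a unique binary operation $G(x,y)=F(x,\ldots,x,y)$, while the latter identifies such binary $G$'s as the joins of semilattice orders nondecreasing for $\leq$. The theorem follows by checking that the binary reduct inherits the relevant properties in both directions.

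For the forward implication, Theorem~\ref{prop:red} supplies a binary $G$ to which $F$ reduces; since reducibility is defined only through an associative binary operation, $G$ is automatically associative. Idempotency of $G$ is immediate from $G(x,x)=F(x,\ldots,x)=x$, and $\leq$-preservation is inherited componentwise. The delicate step is symmetry of $G$: I would compute $F(y,x,\ldots,x)$ in two different ways. On one hand, $k$-ary symmetry of $F$ gives $F(y,x,\ldots,x)=F(x,\ldots,x,y)=G(x,y)$. On the other hand, the reduction identity $F(x_1,\ldots,x_k)=G(G(\ldots G(x_1,x_2),x_3),\ldots,x_k)$, combined with the already-established associativity and idempotency of $G$ (so that $G(G(z,x),x)=G(z,G(x,x))=G(z,x)$ and the iterated right-multiplication by $x$ collapses), shows that the same value equals $G(y,x)$. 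Once $G$ is known to satisfy all four binary properties, Theorem~\ref{cor:main} identifies it as the join operation of a semilattice order nondecreasing for $\leq$.

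Conversely, if $F$ is reducible to the join $G$ of a semilattice order nondecreasing for $\leq$, then Theorem~\ref{cor:main} tells us that $G$ is binary associative, idempotent, symmetric, and $\leq$-preserving. Reducibility of $F$ to the associative $G$ via~\eqref{eqn:dec} makes $F$ $k$-ary associative by construction; idempotency and $\leq$-preservation of $F$ follow from those of $G$ by induction on the nesting depth; and $k$-ary symmetry of $F$ follows because any permutation of $(x_1,\ldots,x_k)$ can be realized by finitely many adjacent transpositions, each justified by the binary symmetry and associativity of $G$.

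The principal obstacle is the symmetry check in the forward direction: the tuples $(x,\ldots,x,y)$ and $(y,\ldots,y,x)$ represent different multisets, so $k$-ary symmetry of $F$ alone does not yield $G(x,y)=G(y,x)$. One must exploit the reduction identity together with the already-established associativity and idempotency of $G$ to collapse the iterated right-multiplication by $x$ to a single factor. Once this verification is carried out, the result is a clean juxtaposition of Theorems~\ref{prop:red} and~\ref{cor:main}.
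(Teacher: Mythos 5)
Your proof is correct and follows essentially the same route as the paper, which simply states the result as a direct combination of Theorem~\ref{prop:red} and Theorem~\ref{cor:main} without spelling out the verifications. The details you supply (in particular the collapse $G(G(z,x),x)=G(z,x)$ used to get symmetry of the binary reduct, and the adjacent-transposition argument for $k$-ary symmetry in the converse) are exactly the routine checks the paper leaves implicit.
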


Combining  Proposition \ref{prop:cat} and Theorem \ref{thm:kary}, we obtain the following corollary.

\begin{cor}
The number of $k$-ary operations on $X_n$  that are associative, idempotent, symmetric, and $\leq_n$-preserving operation is the $n^{th}$ Catalan number.
\end{cor}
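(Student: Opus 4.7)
The plan is to establish a bijection between the set of $k$-ary operations on $X_n$ that are associative, idempotent, symmetric, and $\leq_n$-preserving, and the set of $\leq_n$-preserving semilattice operations on $X_n$, and then invoke Proposition \ref{prop:cat} to count the latter.

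First I would use Theorem \ref{thm:kary} to characterize the $k$-ary operations $F\colon X_n^k \to X_n$ of interest as exactly those reducible, via the iteration in \eqref{eqn:dec}, to the join operation of some semilattice order on $X_n$ that is nondecreasing for $\leq_n$. This gives a surjective map from such $k$-ary operations $F$ to the set of $\leq_n$-preserving semilattice operations on $X_n$, namely $F \mapsto G$ where $G$ is a binary reduct of $F$.

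Next I would appeal to Theorem \ref{prop:red} to conclude that this map is also injective: the binary operation $G$ to which $F$ reduces is unique and can be recovered by the explicit formula $G(x,y)=F(x,\ldots,x,y)$, so distinct $F$'s give rise to distinct $G$'s. In the other direction, given a $\leq_n$-preserving semilattice operation $G$, the $k$-ary operation defined by $F(x_1,\ldots,x_k)=G(\cdots G(G(x_1,x_2),x_3)\cdots,x_k)$ is clearly idempotent, symmetric (since $G$ is associative, idempotent, and commutative), $\leq_n$-preserving (since $G$ is), and associative (being reducible to an associative binary operation), so the map $F \mapsto G$ is indeed a bijection.

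Finally, I would apply Proposition \ref{prop:cat}, which states that the number of $\leq_n$-preserving semilattice operations on $X_n$ equals the $n^{\text{th}}$ Catalan number, to conclude the proof. There is essentially no technical obstacle here since Theorems \ref{thm:kary} and \ref{prop:red} together with Proposition \ref{prop:cat} do all the heavy lifting; the only point worth stating explicitly is the bijectivity of $F \mapsto G$, which follows from the uniqueness clause of Theorem \ref{prop:red}.
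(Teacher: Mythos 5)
Your proof is correct and follows essentially the same route as the paper, which obtains the corollary by simply combining Theorem \ref{thm:kary} with Proposition \ref{prop:cat}; you merely make explicit the bijection $F \mapsto G$ (with inverse given by the derived operation of \eqref{eqn:dec}) that the paper leaves implicit, using the uniqueness in Theorem \ref{prop:red}.
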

\section{Conclusions and further research}
In this paper, we have characterized $\leq$-preserving semilattice operations in terms of a property generalizing the single-peakedness property (Theorem \ref{thm:mainic}). We have also characterized finite $\leq$-preserving semilattices in terms of properties of their Hasse diagram (Theorem \ref{thm:main}).  In particular, we have proved that the number of nondecreasing semilattice operations on a $n$-element chain is the $n^{\text{th}}$ Catalan number (Proposition \ref{prop:cat}). Thus, the main results of this paper are new contributions to the problem of characterizing subclasses of associative operations.

We list below several open questions and topics of current research.
\begin{enumerate}[(I)]

\item Let $\preceq$ be a semilattice order on $X_n$. CI-property for $\leq_n$, internality for $\leq_n$, linear filter property, and the property of having a Hasse diagram that is a binary tree have straightforward first order translations in the language of semigroups. Find alternative characterizations of classes of idempotent and symmetric  semigroups whose semilattice orders satisfy some of these properties.

\item Find a closed-form expression for the number of semilattice orders on $X_n$ that are internal for $\leq_n$ and have the linear filter property (see Proposition \ref{prop:catbis}).

\item Find a closed-form expression for the number of semilattice orders on $X_n$ that have the CI-property for $\leq_n$, and whose Hasse diagram is a binary tree (see Proposition  \ref{prop:catter}).

\item Find a recurrence relation and a closed-form expression for the   number of semilattice orders on $X_n$ that are internal for $\leq_n$. The first elements of this sequence are $1, 1, 2, 7, 36, 247$.

\item Identify explicit one-to-one correspondences between the class of nondecreasing semilattice orders for $\leq_n$ and other classes  of objects that are counted by the Catalan numbers \cite{Stanley1999,Stanley2015}.

\item Find  characterizations corresponding to Theorems \ref{thm:mainic} and \ref{thm:main}  for semilattice operations that are $\leq$-preserving  on a poset $(X,\leq)$.

\end{enumerate}

\appendix
\section{Additional enumeration problems}\label{sec:ap}
In this Section we give recurrence relations on $n$ for the number $\beta(n)$ of semilattice orders on $X_n$ that are internal for $\leq_n$ and have the linear filter property, and the number $\delta(n)$ of binary semilattice orders that have the CI-property  for $\leq_n$.  We also give a recurrence relation on $\preceq$ for the number  $\gamma(\preceq)$ of total orders for which a binary semilattice order $\preceq$ is internal, and the number $\eta(\preceq)$ of total orders for which $\preceq$ has the CI-property.

By definition, we have $\beta(0)=\delta(0)=1$, $\beta(1)=\delta(1)=1$,  $\gamma(\varnothing)=\eta(\varnothing)=1$, and $\gamma(\preceq)=\eta(\preceq)=1$ if $\preceq$ is the only semilattice order on $X_1$. The following result provides a recurrence relation for $\beta(n)$. It turns out that $\beta(n) = A006014(n)$ for $n\geq 1$ (see \cite{OEIS}).

\begin{prop}\label{prop:catbis}
The sequence $\beta\colon \mathbb{N} \to \mathbb{N}$ satisfies the recurrence relation
\[
\beta(n)=\sum_{i=1}^{n-2} \beta(i) \, \beta(n-i-1) + n\, \beta(n-1), \qquad n\geq 2.
\]
\end{prop}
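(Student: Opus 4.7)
The plan is to adapt the recursive decomposition used in the proof of Proposition~\ref{prop:cat} by classifying any semilattice order $\preceq$ on $X_n$ that is internal for $\leq_n$ and has the linear filter property according to the number of children of its top element $r$.

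I would first argue that $r$ has at most two children. Indeed, if $r$ had three children $a$, $b$, $c$, they would be pairwise $\preceq$-incomparable and the linear filter property would force $[a)_\preceq = \{a,r\}$, $[b)_\preceq = \{b,r\}$, and $[c)_\preceq = \{c,r\}$, so that $a \curlyvee b = a \curlyvee c = b \curlyvee c = r$, contradicting the ``moreover'' part of Lemma~\ref{lem:int}.

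Next I would treat the two cases separately. If $r$ has a single child, then the restriction of $\preceq$ to $X_n \setminus \{r\}$ is again an internal $+$ linear filter semilattice order, and conversely every such order on $X_n \setminus \{r\}$ extends uniquely by placing $r$ on top. A subtle point here is that, contrary to the proof of Proposition~\ref{prop:cat}, the absence of the CI-property makes the first item of Lemma~\ref{lem:bez} inapplicable, so $r$ may occupy any of the $n$ positions of $X_n$; this accounts for the term $n\, \beta(n-1)$. If $r$ has two children $x_1$ and $x_2$, Corollary~\ref{cor:int} gives $r \in \{2, \ldots, n-1\}$ and, after possibly swapping $x_1$ and $x_2$, $(x_1]_\preceq = \{1, \ldots, r-1\}$ and $(x_2]_\preceq = \{r+1, \ldots, n\}$. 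Thus $\preceq$ is equivalent to the data of an internal $+$ linear filter semilattice order on each of these half-chains, contributing $\sum_{r=2}^{n-1} \beta(r-1)\beta(n-r) = \sum_{i=1}^{n-2} \beta(i) \beta(n-i-1)$ orders after reindexing with $i = r-1$.

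The main obstacle I anticipate is the converse verification in each case: assembling the pieces via the new top $r$ must yield an order that is truly internal for $\leq_n$ (the linear filter property being immediate from disjointness of the ideals and the fact that $r$ is comparable to every element). This reduces to a routine case analysis on triples $x < y < z$ in $X_n$ that contain $r$ or, in the two-child case, straddle the two components; in such triples the relevant joins either equal $r$ or remain within a single component, and the internality inequalities follow from the fact that every element of $(x_1]_\preceq$ is strictly less than $r$ in $\leq_n$ and every element of $(x_2]_\preceq$ is strictly greater. Summing the two cases yields the claimed recurrence.
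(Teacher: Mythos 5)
Your proposal is correct and follows essentially the same route as the paper's proof: decompose by the top element $r$, distinguish whether $r$ has one or two children (using Lemma~\ref{lem:bez} and Corollary~\ref{cor:int}), observe that without the CI-property $r$ may sit at any of the $n$ positions in the one-child case, and sum the contributions to get the recurrence (your form $\sum_{i=1}^{n-2}\beta(i)\beta(n-i-1)+n\beta(n-1)$ is just a reindexing of the paper's $\sum_{i=1}^{n}\beta(i-1)\beta(n-i)+(n-2)\beta(n-1)$). You are in fact somewhat more explicit than the paper about why the top has at most two children and about the converse (gluing) verification, which the paper leaves implicit.
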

\begin{proof}
Let $\preceq$ be a semilattice order on $X_n$ that is internal for $\leq_n$ and has the linear filter property, let $r$ be the top element of $(X_n,\preceq)$, and  set $X'=X_n\setminus\{r\}$. If $r$ has only one child, then $\preceq_{X'}$ is one of the $\beta(n-1)$ semilattice orders that have the linear filter property and are internal for $\leq_{X'}$. If $r$ has two children, then $r\not\in\{1,n\}$ by Lemma  \ref{lem:bez} (\ref{it:lio02}). By Corollary \ref{cor:int}, the order  $\preceq_{X'}$ is the union of one of the $\beta(r-1)$ semilattices orders on $[1,r-1]$ that have the linear filter property and are internal for $\leq_{[1,r-1]}$ with one of the $\beta(n-r)$ semilattice orders on $[r+1,n]$ that have the linear filter property and are internal for $\leq_{[r+1,n]}$. Thus, we obtain
\begin{equation}\label{eqn:cat02}
\beta(n)=\sum_{i=1}^{n} \beta(i-1)\, \beta(n-i) + (n-2)\, \beta(n-1), \qquad n\geq 2,
\end{equation}
which concludes the proof.
\end{proof}
Observe that the recurrence relation \eqref{eqn:cat02} differs from the relation \eqref{eqn:cat} defining the Catalan numbers by the term $(n-2) \,\beta(n-1)$ only.

\begin{rem}
The number of isomorphism types of semilattice orders on $X_n$ that are internal for $\leq_n$ and have the linear filter property is equal to $\tau(n)$ (see Corollary \ref{cor:type}).
\end{rem}

A similar proof as for Proposition \ref{prop:hty} gives the following result.

\begin{prop}\label{prop:gde}
Let $(X_n,\preceq)$ be a binary semilattice ($n\geq 2$) with top element $r$. Let $C_1$ and $C_2$ be the connected components of $G-r$,  with the convention that $C_2=\varnothing$ if $r$ has only one child. The following conditions are equivalent.
\begin{enumerate}[(i)]
\item\label{it:loig01} The order $\preceq$ is internal for $\leq_n$.
\item\label{it:loig02} There exist total orders $\leq_1$ on $C_1$ and $\leq_2$ on $C_2$ such that
\begin{enumerate}[(a)]
\item the order $\preceq_{C_i}$ is internal for $\leq_i$ for every $1 \leq i\leq 2$,
\item if $C_2\neq \varnothing	$ then  $\leq$ is obtained  by adding $r$ as the top of $\leq_{1}$ and the bottom of $\leq_{2}$, or conversely, and if $C_2=\varnothing$ then $\leq$ is obtained by inserting $r$ anywhere in $\leq_{1}$.
\end{enumerate}
\end{enumerate}
\end{prop}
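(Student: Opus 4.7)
The plan is to mirror the proof of Proposition \ref{prop:hty}, replacing the CI-property by internality throughout and exploiting the fact that any binary semilattice automatically has the linear filter property by Lemma \ref{lem:tree}. Write $r$ for the top of $(X_n,\preceq)$. In the two-children case, the components are the principal ideals $C_i=(x_i]_\preceq$, each closed under $\curlyvee$; in the single-child case, the unique component is $C_1=(x]_\preceq=X_n\setminus\{r\}$, again closed under $\curlyvee$.

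For the implication (\ref{it:loig02}) $\Rightarrow$ (\ref{it:loig01}), I would verify the equivalent form of internality supplied by Lemma \ref{lem:int}(\ref{it:mlo02}), namely that $a\curlyvee b$ lies in the $\leq$-interval determined by $a$ and $b$. The case analysis is by the location of $a,b$: if both lie in the same component $C_i$, then $a\curlyvee b\in C_i$ and internality of $\preceq_{C_i}$ for $\leq_i$, together with the fact that $\leq_i$ is (by the construction of $\leq$) the restriction of $\leq$ to $C_i$, delivers the claim. In every remaining case one has $a\curlyvee b = r$ (because $a,b$ are either in different components or one of them is $r$), and the construction of $\leq$ — either by placing $r$ between $\leq_1$ and $\leq_2$, or by inserting $r$ anywhere inside $\leq_1$ when $C_2=\varnothing$ — guarantees that $r$ is $\leq$-between $a$ and $b$.

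For (\ref{it:loig01}) $\Rightarrow$ (\ref{it:loig02}), the binary semilattice hypothesis yields the linear filter property, so Corollary \ref{cor:int} applies as soon as $r$ has two children, giving (up to a swap) $C_1=\{1,\ldots,r-1\}$ and $C_2=\{r+1,\ldots,n\}$ as $\leq_n$-intervals. Defining $\leq_i$ as the restriction of $\leq_n$ to $C_i$, internality of $\preceq_{C_i}$ for $\leq_i$ is inherited from the internality of $\preceq$ for $\leq_n$ via Lemma \ref{lem:int}(\ref{it:mlo02}), since $C_i$ is $\curlyvee$-closed. When $r$ has only one child, taking $C_1=X_n\setminus\{r\}$ and $\leq_1$ the restriction of $\leq_n$ to $C_1$ works by the same inheritance, and $\leq_n$ is obtained by reinserting $r$ at its $\leq_n$-position inside $\leq_1$.

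The only conceptual departure from Proposition \ref{prop:hty}, and essentially the only point requiring care, concerns the single-child case. Under the CI-property, Lemma \ref{lem:bez}(\ref{it:lio01}) forces $r\in\{1,n\}$, so $r$ must be appended at an extremum of $\leq_1$; with only internality assumed, that forcing is lost (the converse direction in Lemma \ref{lem:bez}(\ref{it:lio02}) goes the other way), and $r$ may occupy any position of $\leq_n$ inside $\leq_1$. This flexibility is exactly what the phrase ``$\leq$ is obtained by inserting $r$ anywhere in $\leq_{1}$'' in condition (b) records, so the induction and case-analysis go through without further obstacle.
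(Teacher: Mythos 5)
Your proposal is correct and follows essentially the route the paper intends: the published ``proof'' is just the remark that the argument of Proposition \ref{prop:hty} adapts, and you carry out exactly that adaptation, using Lemma \ref{lem:int}(\ref{it:mlo02}) for (\ref{it:loig02})$\Rightarrow$(\ref{it:loig01}) and Lemma \ref{lem:tree}, Lemma \ref{lem:bez}, and Corollary \ref{cor:int} for the converse, while correctly isolating the single-child case where internality alone no longer forces $r\in\{1,n\}$. Replacing the paper's ``easy induction'' by the direct argument on the $\curlyvee$-closed components is a harmless (indeed slightly cleaner) variation, not a different method.
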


The following recurrence relation  can be obtained by induction on $\preceq$ using Proposition \ref{prop:gde}.

\begin{cor}\label{cor:de}
Let $(X_n,\preceq)$ be a binary semilattice ($n\geq 2$) with top element $r$.  Let $C_1$ and $C_2$ be the connected components of $G-r$,  with the convention that $C_2=\varnothing$ if $r$ has only one child. Then,
 \[
\gamma(\preceq)=2^{i-1}\, n^{2-i}\, \gamma(\preceq_{C_1})\,  \gamma(\preceq_{C_2}),
\]
where $i$ is the number of children of $r$.
\end{cor}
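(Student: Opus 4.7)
The plan is to derive the formula as a direct counting consequence of Proposition \ref{prop:gde}, which describes every total order $\leq$ on $X_n$ making $\preceq$ internal as being assembled from an admissible pair $(\leq_1,\leq_2)$ (where $\preceq_{C_i}$ is internal for $\leq_i$) together with a choice of placement for the root $r$. First I would observe that this description is a bijection: given such a $\leq$, the restrictions $\leq_{C_1}$ and $\leq_{C_2}$ are forced, as is the position of $r$ relative to them, and conversely distinct data yield distinct total orders. Hence $\gamma(\preceq)$ factors as the number of admissible pairs $(\leq_1,\leq_2)$ times the number of valid placements of $r$.

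I would then split on the number $i\in\{1,2\}$ of children of $r$. In the case $i=1$, we have $C_2=\varnothing$, so $\gamma(\preceq_{C_2})=1$ and $|C_1|=n-1$. By Proposition \ref{prop:gde}, every valid $\leq$ arises by choosing one of the $\gamma(\preceq_{C_1})$ internal orders $\leq_1$ on $C_1$ and inserting $r$ into any one of the $n$ gap positions in $\leq_1$, so
\[
\gamma(\preceq)=n\,\gamma(\preceq_{C_1})=2^{0}\,n^{1}\,\gamma(\preceq_{C_1})\,\gamma(\preceq_{C_2}).
\]
In the case $i=2$ both $C_1$ and $C_2$ are nonempty, and Proposition \ref{prop:gde} only allows the two placements ``$r$ above $\leq_1$ and below $\leq_2$'' or its mirror. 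Each admissible pair $(\leq_1,\leq_2)$ therefore contributes exactly $2$ total orders, giving
\[
\gamma(\preceq)=2\,\gamma(\preceq_{C_1})\,\gamma(\preceq_{C_2})=2^{1}\,n^{0}\,\gamma(\preceq_{C_1})\,\gamma(\preceq_{C_2}).
\]

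Both cases collapse to the single formula $\gamma(\preceq)=2^{i-1}\,n^{2-i}\,\gamma(\preceq_{C_1})\,\gamma(\preceq_{C_2})$. There is no substantial obstacle here; Proposition \ref{prop:gde} does all the structural work, and the only minor points deserving a written sentence are (i) verifying that the insertion count in the single-child case is exactly $n$ (the number of gaps of a chain of length $n-1$, not $n-1$) and (ii) noting that no configurations other than the two mirror placements can occur in the two-children case, a restriction enforced by the ``top of one, bottom of the other'' clause of Proposition \ref{prop:gde}.
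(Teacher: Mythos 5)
Your proposal is correct and follows exactly the route the paper intends: the paper simply states that the recurrence "can be obtained by induction on $\preceq$ using Proposition \ref{prop:gde}", and your argument is precisely that counting — the bijection between valid total orders $\leq$ and the data $(\leq_1,\leq_2,\text{placement of }r)$, with $n$ insertion positions when $i=1$ and the two mirror placements when $i=2$. The two minor points you flag (the gap count being $n$, not $n-1$, and the exclusivity of the two placements) are exactly the details worth recording, so nothing is missing.
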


To provide a recurrence relation for the number $\delta(n)$ of binary semilattice orders that have the CI-property for $\leq_n$, we use the following lemma.
\begin{lem}\label{lem:IC}
Let $(X_n,\preceq)$ be a binary semilattice with top element $r$ such that $\preceq$ has the CI-property for $\leq_n$. If $r\in [2,n-1]$, then $r$ has two children $x_1$, $x_2$ in the Hasse diagram of $(X_n,\preceq)$, and $1$ and $n$ are $\preceq$-incomparable. Moreover, if $1\preceq x_1$ and $n\preceq x_2$, then $(x_1]_\preceq=\{1, 2, \ldots, r-1\}$ and $(x_2]_\preceq=\{r+1, r+2, \ldots, n\}$.
\end{lem}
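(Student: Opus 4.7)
The plan is to derive the three conclusions (i.e., that $r$ has exactly two children $x_1,x_2$, that $1\incomp n$, and that the two principal ideals below the children are exactly $\{1,\dots,r-1\}$ and $\{r+1,\dots,n\}$) in this order, relying mainly on the convexity of principal ideals (Lemma \ref{lem:ic}) together with the linear filter property supplied by the binary semilattice hypothesis (Lemma \ref{lem:tree}).

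First, I would dispose of the number of children of $r$. Since $r\in[2,n-1]$, the contrapositive of Lemma \ref{lem:bez}(\ref{it:lio01}) rules out the case in which $r$ has a unique child. As $(X_n,\preceq)$ is a binary semilattice, $r$ cannot have three or more children either, so $r$ has exactly two children $x_1,x_2$.

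Next, I would show the incomparability of $1$ and $n$ and the correct assignment of $x_1,x_2$. Every $y\prec r$ lies in the subtree rooted at some child of $r$, so $y\preceq x_1$ or $y\preceq x_2$; in particular this applies to $1$ and $n$. If both $1\preceq x_i$ and $n\preceq x_i$ held for the same $i$, then by Lemma \ref{lem:ic} the principal ideal $(x_i]_\preceq$ would be a convex subset of $(X_n,\leq_n)$ containing $1$ and $n$, forcing $(x_i]_\preceq=X_n$ and so $r\preceq x_i$, contradicting $x_i\prec r$. Thus, after relabeling, $1\preceq x_1$ and $n\preceq x_2$. Now if $1$ and $n$ were $\preceq$-comparable, then either of them would be a common lower bound of the incomparable pair $\{x_1,x_2\}$, violating the linear filter property (Lemma \ref{lem:linf}) that is built into being a binary semilattice (Lemma \ref{lem:tree}).

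Finally, I would pin down the two ideals. By Lemma \ref{lem:ic} each $(x_i]_\preceq$ is convex in $(X_n,\leq_n)$; since $(x_1]_\preceq$ contains $1$ but not $r$, it has the form $\{1,\dots,k\}$ with $k\leq r-1$, and similarly $(x_2]_\preceq=\{m,\dots,n\}$ with $m\geq r+1$. The linear filter property forces $(x_1]_\preceq\cap(x_2]_\preceq=\varnothing$, while the tree structure of the Hasse diagram gives $X_n=\{r\}\cup(x_1]_\preceq\cup(x_2]_\preceq$. Counting elements yields $k+(n-m+1)=n-1$, i.e.\ $m=k+2$, and combining this with $k\leq r-1$ and $m\geq r+1$ collapses both inequalities to equalities, so $k=r-1$ and $m=r+1$, as required. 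I do not anticipate a real obstacle; the only subtlety is being careful to invoke the linear filter property (via the binary semilattice hypothesis) rather than the stronger internality property used in Corollary \ref{cor:int}, since the present statement assumes only the CI-property.
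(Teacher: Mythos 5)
Your proof is correct and follows essentially the same route as the paper's: the contrapositive of Lemma \ref{lem:bez}~(\ref{it:lio01}) yields the two children, and convexity of principal ideals (Lemma \ref{lem:ic}, i.e.\ the CI-property) pins down the rest. The paper's own argument is terser—it just observes that no $(x_i]_\preceq$ can contain elements on both sides of $r$, from which the incomparability of $1$ and $n$ and the identification of the two ideals follow without your appeal to the linear filter property or the counting step—but these are only cosmetic differences.
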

\begin{proof}
We know by Lemma \ref{lem:bez} (\ref{it:lio01}) that $r$ has two children $x_1$ and $x_2$. By the CI-property, there is no $y,z \in X_n$ such that $y<r<z$ (resp. $z<r<y$) and $y,z \in (x_i]_\preceq$ for some $i\in \{1,2\}$. 
\end{proof}

\begin{prop}\label{prop:catter}
The sequence $\delta\colon \mathbb{N} \to \mathbb{N}$ satisfies the recurrence relation
\begin{equation}\label{eqn:ICB}
\delta(n)=\sum_{i=1}^{n} \delta(i-1)\,  \delta(n-i) + \sum_{j=1}^{n-2}{n-1 \choose j}\, \delta(j)\, \delta(n-j-1),
\end{equation}
for every $n \geq 1$.

\end{prop}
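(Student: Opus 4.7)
My plan is to mimic the case-analysis proofs of Propositions \ref{prop:cat} and \ref{prop:catbis}: starting from a binary semilattice order $\preceq$ on $X_n$ with the CI-property for $\leq_n$ and top element $r$, I will peel off $r$ and count recursively in terms of the induced semilattice structures on the subtrees rooted at $r$'s children. The first step is to apply Lemma \ref{lem:bez}(\ref{it:lio01}) (and its contrapositive) to split the enumeration into three cases: (a) $r \in [2,n-1]$, in which case $r$ must have exactly two children; (b) $r \in \{1,n\}$ with a single child; and (c) $r \in \{1,n\}$ with two children.

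For case (a) I will invoke Lemma \ref{lem:IC} to identify the two subtrees under $r$ as the intervals $\{1,\ldots,r-1\}$ and $\{r+1,\ldots,n\}$, and then check that gluing any pair of independently chosen CI-respecting binary semilattice orders on these intervals produces a valid order on $X_n$, contributing $\delta(r-1)\delta(n-r)$. For case (b), removing $r$ leaves an arbitrary CI-respecting binary semilattice order on the $(n-1)$-element set $X_n\setminus\{r\}$, since any CI-triple involving $r$ is trivial ($r$ being the $\preceq$-top and an endpoint of $\leq_n$); this contributes $\delta(n-1)$ for each choice $r \in \{1,n\}$. Re-indexing these two cases by $i=r \in \{1,\ldots,n\}$ will merge them into the first sum $\sum_{i=1}^{n}\delta(i-1)\delta(n-i)$ of the recurrence.

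Case (c) is the delicate step and the place where I expect the main obstacle to lie, because Lemma \ref{lem:IC} cannot be applied directly: its proof requires elements strictly below and strictly above $r$ in $\leq_n$, which are not simultaneously available when $r \in \{1,n\}$. I will therefore work from first principles to determine how the CI-property constrains the unordered partition $\{(x_1]_\preceq,(x_2]_\preceq\}$ of $X_n\setminus\{r\}$ into two non-empty subsets (of sizes $j$ and $n-1-j$), and to verify that each admissible partition combines freely with CI-respecting binary semilattice structures on the two parts. Summing over $j$, with the factor $\binom{n-1}{j}$ accounting for the choice of a size-$j$ subset of $X_n\setminus\{r\}$ and $\delta(j)\delta(n-j-1)$ for the structures on the two parts, should then yield the second sum $\sum_{j=1}^{n-2}\binom{n-1}{j}\delta(j)\delta(n-j-1)$. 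Adding the contributions of the three cases will produce the recurrence \eqref{eqn:ICB}.
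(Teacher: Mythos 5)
Your cases (a) and (b) match the paper's decomposition and are unproblematic, and your instinct that Lemma \ref{lem:IC} cannot be invoked when $r\in\{1,n\}$ is correct. The genuine gap is exactly the step you defer in case (c): the verification you promise ("each admissible partition combines freely", giving the factor $\binom{n-1}{j}$) cannot succeed. Even when $r\in\{1,n\}$, the hypothesis is that $\preceq$ has the CI-property for $\leq_n$ on all of $X_n$, so by Lemma \ref{lem:ic} the principal ideals $(x_1]_\preceq$ and $(x_2]_\preceq$ must be \emph{convex subsets of the chain} $(X_n,\leq_n)$, not merely carriers of restricted orders with the CI-property for the induced chains. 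Two disjoint convex sets partitioning the interval $X_n\setminus\{r\}$ are forced to be $[2,j+1]$ and $[j+2,n]$ when $r=1$ (dually when $r=n$), so for each choice of $r\in\{1,n\}$ there are only $n-2$ admissible partitions, one per size $j$, not $\binom{n-1}{j}$ choices of a $j$-subset. Concretely, for $n=4$ and $r=1$ the split $\{2,4\}\cup\{3\}$ is inadmissible no matter which of the two orders you place on $\{2,4\}$: the triple $2\leq 3\leq 4$ requires $3\preceq 2\curlyvee 4\in\{2,4\}$, which fails. Hence the case-(c) contribution obtained from the first-principles analysis you propose is $2\sum_{j=1}^{n-2}\delta(j)\,\delta(n-j-1)$, and your plan cannot produce the second sum of \eqref{eqn:ICB}.

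Moreover, the shortfall is not repairable by a cleverer argument along these lines: a direct enumeration for $n=4$ yields $8$ single-peaked chains, $6$ orders whose Hasse diagram is a root with one child and two grandchildren, and $12$ orders of the remaining binary shape, i.e.\ $26$ binary semilattice orders with the CI-property for $\leq_4$, whereas \eqref{eqn:ICB} predicts $30$. So the binomial term overcounts precisely by admitting the non-convex splits excluded above (the paper's own proof of Proposition \ref{prop:catter} glosses over the same point, asserting only that the restrictions to $(x_1]$ and $(x_2]$ have the CI-property, which is necessary but not sufficient). Before writing up, you should recheck small cases and either prove the amended recurrence $\delta(n)=\sum_{i=1}^{n}\delta(i-1)\,\delta(n-i)+2\sum_{j=1}^{n-2}\delta(j)\,\delta(n-j-1)$ or explain what quantity the stated recurrence is actually counting.
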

\begin{proof}
We say that $\preceq$ has ICB-property for $\leq$ if $\preceq$ is a binary semilattice order that has the CI-property for $\leq$.
Let $\preceq$ be a semilattice order on $X_n$ that has the ICB-property for $\leq_n$, let $r$ be the top element of $(X_n,\preceq)$, and  set $X'=X_n\setminus\{r\}$. By Lemma \ref{lem:bez}, if $r$ has only one child, then $r\in \{1,n\}$ and $\preceq_{X'}$ is one of the $\delta(n-1)$, semilattice orders that have the ICB-property for $\leq_{X'}$. Also, by Lemma \ref{lem:bez}, if $r\not\in\{1,n\}$ then $r$ has two children and by Lemma \ref{lem:IC} the relation $\preceq_{X'}$ is the union of one of the $\delta(r-1)$ semilattice orders on $[1,r-1]$ that have the ICB-property for $\leq_{[1,r-1]}$, with one of the $\delta(n-r)$ semilattice orders on $[r+1,n]$ that have the ICB-property for $\leq_{[r+1,n]}$. If $r\in \{1,n\}$ and $r$ has two children, then $\preceq_{X'}$ is the union of one of the $\delta(|(x_1]|)$ semilattice orders on $(x_1]$ that have the ICB-property for $\leq_{(x_1]}$ with one of the  $\delta(n-1-|[x_1)|)$ semilattice orders on $[x_2)$ that have the ICB-property for $\leq_{(x_2]}$.
\end{proof}
By simple algebraic manipulations, we obtain that \eqref{eqn:ICB} is equivalent to
\[
\delta(n+1)+2\, \delta(n)=\sum_{i=0}^n \left(\binom{n}{i}+1\right)\, \delta(i) \, \delta(n-i),
\]
for every $n\geq 2$.

\begin{table}
\[
\begin{array}{|c|c|c|c|c|}\hline
n &  \alpha(n) & \tau(n) & \beta(n) & \delta(n)\\ \hline
0 & 1 &  1 & 1 & 1\\
1 & 1 & 1 & 1 & 1\\
2 & 2 & 1 & 2 & 2\\
3 & 5 & 2 & 7 & 7\\
4 & 14 & 3 & 32 & 30\\
5 & 42 & 5 & 178 & 158\\
6 & 132 & 10 & 1160 & 984\\
7 & 429 & 21 & 8653 & 7129\\
8 & 1430 &42 & 72704 & 59026\\\hline
OEIS & A000108 & A001190 & A006014 & \\\hline
\end{array}
\]
\caption{Table of sequences $ \alpha$, $\tau$, $\beta$ and $\delta$, with their OEIS entries \cite{OEIS}}
\end{table}

Using Lemma \ref{lem:ic}, a similar proof as for Proposition \ref{prop:hty} gives the following result. Recall that if $(X,\leq)$ and $(X',\leq')$ are two disjoint  partially ordered sets, then the \emph{ordinal sum} $\leq\oplus\leq'$ is the order $\preceq$ defined on $X\cup X'$ by setting $y\preceq z$ if $y\leq z$, or $y\leq' z$, or $y\in X$ and $z\in X'$.

\begin{prop}\label{prop:gde2}
Let $(X_n,\preceq)$ be a binary semilattice order ($n\geq 2$) with top element $r$. Let $C_1$ and $C_2$ be the connected components of $G-r$,  with the convention that $C_2=\varnothing$ if $r$ has only one child. The following conditions are equivalent.
\begin{enumerate}[(i)]
\item\label{it:loi001} The order $\preceq$ has the CI-property for $\leq_n$.
\item\label{it:loi002} There exist total orders $\leq_1$ on $C_1$ and $\leq_2$ on $C_2$ such that
\begin{enumerate}[(a)]
\item the order $\preceq_{C_i}$ has the CI-property for $\leq_i$ for every $1 \leq i\leq 2$,
\item if $C_2\neq \varnothing$ then $\leq$ is obtained either by adding $r$ as the top of $\leq_{1}$ and the bottom of $\leq_{2}$, or conversely, or by adding $r$ as the top or the bottom of either $\leq_{1} \oplus \leq_{2}$, or $\leq_{1} \oplus \leq_{2}$, and if $C_2=\varnothing$ then $\leq$ is obtained by adding $r$ as the top or the bottom of $\leq_{1}$.
\end{enumerate}
\end{enumerate}
\end{prop}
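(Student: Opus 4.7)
The plan is to mirror the proof of Proposition~\ref{prop:hty}, simply dropping the internality requirement and keeping only the CI-property; this removal is precisely what opens up the additional configurations listed in (\ref{it:loi002})(b). The central tool is Lemma~\ref{lem:ic}, which characterizes the CI-property as convexity in $(X_n,\leq)$ of every principal ideal of $(X_n,\preceq)$, together with Lemmas~\ref{lem:tree} and~\ref{lem:linf}, which give the linear filter property for binary semilattices.

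For (\ref{it:loi001}) $\implies$ (\ref{it:loi002}), assume first that $C_2\neq\varnothing$, so $r$ has two children $x_1,x_2$. Since $(X_n,\preceq)$ is a binary semilattice, Lemma~\ref{lem:tree} gives the linear filter property, so by Lemma~\ref{lem:linf} the incomparable elements $x_1$ and $x_2$ share no lower bound; hence $C_1\cap C_2=(x_1]_\preceq\cap (x_2]_\preceq=\varnothing$ and $X_n=C_1\sqcup\{r\}\sqcup C_2$. Applying Lemma~\ref{lem:ic} to the principal ideals $(x_i]_\preceq=C_i$ shows that each $C_i$ is a convex subset of $(X_n,\leq)$. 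A chain can be partitioned into three pairwise disjoint convex blocks, one of which is the singleton $\{r\}$, in exactly six ways: $r$ may sit between $C_1$ and $C_2$ (two arrangements), or $r$ may be the minimum of $\leq$ with the blocks $C_1,C_2$ arranged in either order above (two arrangements), or $r$ may be the maximum (two arrangements). These six cases are precisely the six possibilities in (\ref{it:loi002})(b) (interpreting the visible typo ``$\leq_1\oplus\leq_2$, or $\leq_1\oplus\leq_2$'' as ``$\leq_1\oplus\leq_2$, or $\leq_2\oplus\leq_1$''). Setting $\leq_i:=\leq_{C_i}$, condition (\ref{it:loi002})(a) is inherited: if $a\leq_i b\leq_i c$ in $C_i$, then $a\leq b\leq c$ in $X_n$, so the CI-property gives $b\preceq a\curlyvee c$, and $a\curlyvee c\in C_i$ because $C_i$ is an ideal. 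The degenerate case $C_2=\varnothing$ follows from Lemma~\ref{lem:bez}(\ref{it:lio01}): $(x_1]_\preceq=X_n\setminus\{r\}$ is a convex subset with $n-1$ elements, forcing $r$ to be the minimum or maximum of $\leq$.

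For (\ref{it:loi002}) $\implies$ (\ref{it:loi001}), I verify condition (\ref{it:bfr03}) of Lemma~\ref{lem:ic}. The principal ideals of $(X_n,\preceq)$ split into the trivially convex ideal $(r]_\preceq=X_n$ and ideals $(x]_\preceq\subseteq C_i$ with $x\neq r$. In each of the six configurations of (\ref{it:loi002})(b), each block $C_i$ is by construction a convex subset of $(X_n,\leq)$; moreover $(x]_\preceq=(x]_{\preceq_{C_i}}$ is convex in $(C_i,\leq_i)$ by hypothesis (\ref{it:loi002})(a) combined with Lemma~\ref{lem:ic} applied inside $C_i$. Convexity in $(C_i,\leq_i)$ together with convexity of $C_i$ in $(X_n,\leq)$ yields convexity of $(x]_\preceq$ in $(X_n,\leq)$, as required. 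The main obstacle is the bookkeeping in the forward direction: one must check that the combinatorial enumeration of placements of a point together with two disjoint convex blocks inside a chain matches exactly the six-fold description in (\ref{it:loi002})(b), properly distinguishing the ``$r$ between $\leq_1$ and $\leq_2$'' cases from the four ``$r$ at a boundary of $\leq_1\oplus\leq_2$ or $\leq_2\oplus\leq_1$'' cases; no step requires substantive new technique beyond the lemmas already established.
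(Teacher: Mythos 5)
Your proof is correct and follows essentially the route the paper intends: the paper only says that Proposition~\ref{prop:gde2} is proved ``using Lemma~\ref{lem:ic}, by a similar proof as for Proposition~\ref{prop:hty}'', and your argument is exactly that adaptation, with the convexity of the principal ideals $C_i=(x_i]_\preceq$ (Lemma~\ref{lem:ic}) forcing one of the six block arrangements in one direction, and principal-ideal convexity verified blockwise in the other. Your reading of the typo in (ii)(b) as ``$\leq_1\oplus\leq_2$ or $\leq_2\oplus\leq_1$'' is the intended one, so no gap remains.
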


The following recurrence relation can be proved by induction on $\preceq$ using Proposition \ref{prop:gde2}.

\begin{cor}
Let $(X_n,\preceq)$ be a binary semilattice ($n\geq 2$) with top element $r$ and let $C_1$ and $C_2$ be the connected components of $G-r$, with the convention that $C_2=\varnothing$ if $r$ has only one child. Then,
 \[
\eta(\preceq)=3^{i-1}\, 2\, \eta(\preceq_{C_1})\,  \eta(\preceq_{C_2}),
\]
where $i$ is the number of children of $r$.
\end{cor}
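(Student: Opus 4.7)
The plan is to set up a bijection between the total orders $\leq$ on $X_n$ for which $\preceq$ has the CI-property and the triples $(\leq_1,\leq_2,\sigma)$, where $\leq_j$ is a total order on $C_j$ for which $\preceq_{C_j}$ has the CI-property, and $\sigma$ is one of the assembly instructions listed in condition (\ref{it:loi002})(b) of Proposition~\ref{prop:gde2}. Surjectivity of this map is exactly the content of Proposition~\ref{prop:gde2}: every such $\leq$ arises from some such assembly. For injectivity, I would recover $\leq_j$ as the restriction of $\leq$ to $C_j$, and recover $\sigma$ from the position of $r$ within $\leq$ (top, middle, or bottom relative to the blocks $C_1$ and $C_2$) together with the relative order of $C_1$ and $C_2$ in $\leq$.

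With this bijection in hand, $\eta(\preceq)$ factors as the number of assemblies times $\eta(\preceq_{C_1})\,\eta(\preceq_{C_2})$, where $\eta(\preceq_{C_2})=1$ when $C_2=\varnothing$ by the stated convention. When $i=1$, Proposition~\ref{prop:gde2} permits the two assemblies placing $r$ at the top or the bottom of $\leq_1$, which gives $2=3^{0}\cdot 2$. When $i=2$, the six permitted assemblies are: $r$ between the two blocks with $\leq_1$ below $\leq_2$ or the reverse; $r$ above $\leq_1\oplus\leq_2$ or $\leq_2\oplus\leq_1$; and $r$ below either of these two ordinal sums, which gives $6=3^{1}\cdot 2$. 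In either case, multiplication yields $\eta(\preceq)=3^{i-1}\cdot 2\cdot \eta(\preceq_{C_1})\,\eta(\preceq_{C_2})$.

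The main step to check carefully is injectivity in the case $i=2$: one must confirm that the six assemblies produce six pairwise distinct total orders on $X_n$. This is immediate from the fact that each assembly is determined by a pair (position of $r$ with respect to the two blocks, relative order of the blocks themselves), and both coordinates of this pair can be read off from $\leq$ directly. No deeper argument is needed, and the formula follows.
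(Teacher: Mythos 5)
Your proposal is correct and follows the paper's intended route: the paper merely asserts that the recurrence follows from Proposition~\ref{prop:gde2}, and your bijective count of the admissible assemblies ($2$ when $i=1$, and $3\cdot 2=6$ when $i=2$, correctly reading the proposition's list as $\leq_1\oplus\leq_2$ or $\leq_2\oplus\leq_1$) together with the convention $\eta(\varnothing)=1$ is exactly that argument made explicit. Note only that both directions of the equivalence in Proposition~\ref{prop:gde2} are used: one gives surjectivity, the other guarantees that each assembly really produces a total order for which $\preceq$ has the CI-property, so that all triples are counted by $\eta(\preceq)$.
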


\end{document}